\documentclass[reqno,12pt]{amsart}
\usepackage{amsfonts}
\usepackage{bbm}
\usepackage{} 
\setlength{\textheight}{23cm}
\setlength{\textwidth}{16cm}
\setlength{\oddsidemargin}{0cm}
\setlength{\evensidemargin}{0cm}
\setlength{\topmargin}{0cm}
\numberwithin{equation}{section}
\usepackage{indentfirst}
 \usepackage{color}
\usepackage{amssymb}
\usepackage{mathrsfs}
\usepackage{xy}
\xyoption{all}
\def\Ext{\mbox{\rm Ext}\,} \def\Hom{\mbox{\rm Hom}} \def\dim{\mbox{\rm dim}\,} \def\Iso{\mbox{\rm Iso}\,}\def\ra{\rightarrow}
\def\lr#1{\langle #1\rangle}    
\def\Ker{\mbox{\rm Ker}\,}   \def\im{\mbox{\rm Im}\,} \def\Coker{\mbox{\rm Coker}\,}
\def\tw{\mbox{\rm tw}\,}\def\id{\mbox{\rm id}\,}

\def\M{\mathcal{M}}
\def\Aut{\mbox{\rm Aut}\,}\def\Dim{\mbox{\rm \textbf{dim}}\,}\def\A{\mathcal{A}\,} \def\H{\mathcal{H}\,}
\def\P{\mathscr{P}\,}

%
%
\theoremstyle{plain} 
\newtheorem{theorem}{\bf Theorem}[section]
\newtheorem{lemma}[theorem]{\bf Lemma}
\newtheorem{corollary}[theorem]{\bf Corollary}
\newtheorem{proposition}[theorem]{\bf Proposition}

\theoremstyle{definition} 
\newtheorem{definition}[theorem]{\bf Definition}
\newtheorem{remark}[theorem]{\bf Remark}
\newtheorem{example}[theorem]{\bf Example}

\newcommand{\bt}{\begin{theorem}}
\newcommand{\et}{\end{theorem}}
\newcommand{\bl}{\begin{lemma}}
\newcommand{\el}{\end{lemma}}
\newcommand{\bd}{\begin{definition}}
\newcommand{\ed}{\end{definition}}
\newcommand{\bc}{\begin{corollary}}
\newcommand{\ec}{\end{corollary}}
\newcommand{\bp}{\begin{proof}}
\newcommand{\ep}{\end{proof}}
\newcommand{\bx}{\begin{example}}
\newcommand{\ex}{\end{example}}
\newcommand{\br}{\begin{remark}}
\newcommand{\er}{\end{remark}}
\newcommand{\be}{\begin{equation}}
\newcommand{\ee}{\end{equation}}
\newcommand{\ba}{\begin{align}}
\newcommand{\ea}{\end{align}}
\newcommand{\bn}{\begin{enumerate}}
\newcommand{\en}{\end{enumerate}}
\newcommand{\bcs}{\begin{cases}}
\newcommand{\ecs}{\end{cases}}


%
\makeatletter
\renewcommand{\section}{\@startsection{section}{1}{0mm}
  {-\baselineskip}{0.5\baselineskip}{\bf\leftline}}
\makeatother

\begin{document}

\title[Hall algebras associated to complexes of fixed size]{Hall algebras associated to complexes of fixed size} 

\author[Haicheng Zhang]{{Haicheng Zhang}} 
\address{Institute of Mathematics, School of Mathematical Sciences, Nanjing Normal University,
Nanjing 210023, P.~R.~China}
\email{zhanghai14@mails.tsinghua.edu.cn (H. Zhang)}

\subjclass[2010]{ 
17B37, 16G20, 17B20.
}
%
\keywords{ 
Bridgeland Hall algebra; derived Hall algebra; complex of fixed size.
}


\begin{abstract}
Let $\A$ be a finitary hereditary abelian category with enough projectives. We study the Hall algebra of complexes of fixed size over projectives. Explicitly, we first give a relation between Hall algebras of complexes of fixed size and cyclic complexes. Secondly, we characterize the Hall algebra of complexes of fixed size by generators and relations, and relate it to the derived Hall algebra of $\A$. Finally, we give the integration map on the Hall algebra of $2$-term complexes over projectives.
\end{abstract}

\maketitle

\section{Introduction}
The Hall algebra of a finite dimensional algebra $A$ over a finite field $k$ was introduced by Ringel \cite{R90a} in 1990. Ringel \cite{R90,R90a} proved that if $A$ is representation-finite and hereditary, the Ringel--Hall algebra of $A$ is isomorphic to the positive part of the corresponding quantized enveloping algebra. In order to give an intrinsic realization of the full quantized enveloping algebra via Hall algebra approach, one has managed to define the Hall algebra of a triangulated category satisfying some homological finiteness conditions (cf. \cite{Toen2006}, \cite{XiaoXu}). However, the root category of a finite dimensional algebra does not satisfy the homological finiteness conditions. In other word, the Hall algebra of a root category has not been defined.

In 2013, for each hereditary algebra $A$, Bridgeland \cite{Br} introduced an algebra, called the Bridgeland Hall algebra of \emph{A}, which is the Hall algebra of $2$-cyclic complexes over projective $A$-modules with some localization and reduction. He proved that the quantized enveloping algebra associated to \emph{A} is embedded into the Bridgeland Hall algebra of $A$. This provides a beautiful realization of the entire quantized enveloping algebra by Hall algebras.

Inspired by Bridgeland's work, for each hereditary algebra $A$ and any nonnegative integer $m\neq1$, Chen and Deng \cite{ChenD} applied Bridgeland's construction to $m$-cyclic complexes over projective $A$-modules, and introduced the Bridgeland Hall algebra of $m$-cyclic complexes of $A$, whose algebra structure was characterized in \cite{ZHC2}.

Cluster algebras were introduced by Fomin and Zelevinsky in \cite{FZ} and later the quantum cluster algebras were introduced by Berenstein and Zelevinsky in \cite{BZ05}.
In \cite{DXZ}, the author and his coauthors consider the Hall algebra of 2-term complexes over projective $kQ$-modules of a finite acyclic quiver $Q$, and realize the quantum cluster algebra with principal coefficients as a quotient algebra of the achieved algebra with some localization and twist.

In this paper, let $\A$ be a finitary abelian category with enough projectives and $m$ be a positive integer. The purpose of this paper is to generalize the construction of Hall algebra of 2-term complexes over projectives to $m$-term complexes, and give a characterization on the algebra structure of the achieved algebra. In Section 2 we recall some homological properties of $m$-cyclic complexes and $m$-term complexes over projectives. We establish a relation between Hall algebras of $m$-cyclic complexes and $m$-term complexes in Section 3. From Section 4 on, we assume that $\A$ is hereditary. We characterize indecomposable objects in the categories of $m$-cyclic complexes and $m$-term complexes over projectives in Section 4. In order to study the Hall algebra of $m$-term complexes over projectives, in Section 5 we first give a characterization on the Hall algebra of bounded complexes over projectives, and relate it to the derived Hall algebra of $\A$. Section 6 is devoting to characterizing the Hall algebra of $m$-term complexes over projectives by generators and relations. As an additional result, we give the integration map on the Hall algebra of $2$-term complexes over projectives in Section 7.

Let us fix some notations used throughout the paper. Let $k=\mathbb{F}_q$ be always a finite field with
$q$ elements. Let $\A$ be an (essentially small) finitary abelian $k$-category with enough projectives, and $\mathscr{P}\subset\A$ be the subcategory consisting of projective objects. Given an exact category $\mathcal {E}$, we denote by $C^b(\mathcal {E})$, $K^b(\mathcal {E})$ and $D^b(\mathcal {E})$ the category of bounded complexes over $\mathcal {E}$, the bounded homotopy category, and the bounded derived category, respectively. The Grothendieck group of $\mathcal {E}$ and the set of isomorphism classes $[X]$ of objects in $\mathcal {E}$ are denoted by $K(\mathcal {E})$ and $\Iso(\mathcal {E})$, respectively. For any object $M\in\mathcal {E}$ we denote by $\hat{M}$ the image of $M$ in $K(\mathcal {E})$. For a finite set $S$, we denote by $|S|$ its cardinality. For an object $M$ in an additive category, we denote by $\Aut(M)$ the automorphism group of $M$, and set $a_M:=|\Aut(M)|$.

\section{Cyclic complexes and complexes of fixed size}
In this section, we summarize some necessary homological properties of cyclic complexes and complexes of fixed size. We focus our attention on the complexes over projectives.

\subsection{Cyclic complexes}
For each positive integer $m$, write $\mathbb{Z}_m=\mathbb{Z}/m\mathbb{Z}=\{0,1,\cdots,m-1\}$. By definition,
an \emph{$m$-cyclic complex} ${{M}_{\circ}}={{({{M}_{i}},{{d}_{i}})}_{i\in {{\mathbb{Z}}_{m}}}}$ over $\mathcal{A}$ consists of objects $M_i$ in $\A$ and morphisms $d_i:M_i\rightarrow M_{i+1}$ for $i\in \mathbb{Z}_m$
satisfying $d_{i+1}d_i=0$.
A \emph{morphism} $f$ between two $m$-cyclic complexes ${{M}_{\circ }}={{({{M}_{i}},{{d}_{i}})}_{i\in {{\mathbb{Z}}_{m}}}}$ and
${{N}_{\circ }}={{({{N}_{i}},{{c}_{i}})}_{i\in {{\mathbb{Z}}_{m}}}}$ is given by a family of morphisms $f_i:M_i\rightarrow N_i$ satisfying $f_{i+1}d_i=c_if_i$
for all $i\in\mathbb{Z}_m$.
The category of $m$-cyclic complexes over $\mathcal{A}$ is denoted by ${C}_m(\mathcal{A})$. For notational simplicity, we write $C_0(\A)$ for the category $C^b(\A)$ of bounded complexes over $\A$, and set $\mathbb{Z}_0=\mathbb{Z}$. The bounded complexes are called \emph{$0$-cyclic complexes}.
For each integer $t$, we have a shift functor $$[t]:\mathcal{C}_m(\mathcal{A})\rightarrow \mathcal{C}_m(\mathcal{A}),~~M_\circ \mapsto M_\circ[t],$$
where $M_\circ[t]=(X_i,f_i)$ is defined by $$X_i=M_{i+t},~~f_i=(-1)^{t}d_{i+t},~~ i\in {\mathbb{Z}_m}.$$

For $m\geq0$, let $C_m(\P)$ be the subcategory of $C_m(\A)$, which is consisting of $m$-cyclic complexes over $\P$. In the sense of component-wise exactness, $C_m(\P)$ is closed under extensions.
For any morphism $f:Q\ra P$~of projectives, if $m\neq1$ one defines $C_f=(M_i,d_i)\in \mathcal{C}_m(\P)$ by
$$M_i=\begin{cases} Q\;\;&\text{~$i=m-1$};\\
                    P\;\;&\text{~$i=0$};\\
                     0 &\text{otherwise,}\end{cases}\qquad
 d_i=\begin{cases} f\;\;&\text{~$i=m-1$};\\
                     0 &\text{otherwise.}\end{cases}$$
If $m=1$ one defines $$C_f={\left(P\oplus Q,\begin{pmatrix}0&f\\0&0\end{pmatrix}\right)}\in C_1(\mathscr{P}).$$
So each projective object $P$ determines an object ${{K}_{P}}:=C_{\id_P}$ in $\mathcal{C}_m(\mathscr{P})$.
By \cite[Lemma 2.3]{ChenD}, all indecomposable projective (injective) objects in $C_m(\P)$ are of the form $K_P[r]$ for some indecomposable object $P\in\P$ and $r\in\mathbb{Z}_m$. That is, $C_m(\P)$ is a Frobenius exact category.

\subsection{Complexes of fixed size}
For each positive integer $m$,
we consider the category $C^m(\A)$, it is the full subcategory of $C^b(\A)$ whose objects are the complexes $M_\bullet=(M_i,d_i)$ with $M_i=0$ if $i\notin\{1,2,\cdots,m\}.$ That is, $$M_{\bullet}=\xymatrix{M_1\ar[r]^-{d_1}&M_2\ar[r]^-{d_2}&\cdots \ar[r]&M_{m-1}\ar[r]^-{d_{m-1}}&M_{m}}$$ with $d_{i+1}d_i=0$ for $1\leq i<m-1$. Each object in $C^m(\A)$ is called an {\em $m$-term complex} over $\A$.
Let $C^m(\P)$ be the subcategory of $C^m(\A)$, which is consisting of $m$-term complexes over $\P$.
Clearly, $C^m(\A)$ is an abelian category, and $C^m(\P)$ is closed under extensions. Actually, $C^1(\P)=\P$.
For $m\geq2$, the Auslander--Reiten theory and some homological properties of $C^m(\P)$ are studied in \cite{Bau2,Cha,Cha2}. In fact, it is proved in \cite{Bau2} that $C^m(\P)$ is an exact category with enough projectives and injectives, and its global dimension is $m-1$. Following \cite{Bau2}, given an object $P\in\P$, we consider the following objects in $C^m(\P)$:
\begin{itemize}
\item[$\bullet$] $J_P=(M_i,d_i)$ with $M_i=0$ if $i\notin\{m-1,m\}$, $M_{m-1}=M_{m}=P$ and $d_{m-1}=\id_P;$
\item[$\bullet$] $S_P=(M_i,d_i)$ with $M_i=0$ if $i\neq1$, and $M_1=P$;
\item[$\bullet$] $T_P=(M_i,d_i)$ with $M_i=0$ if $i\neq m$, and $M_{m}=P$.
\end{itemize}
By \cite[Corollary 3.9]{Bau2}, all indecomposable projective objects in $C^m(\P)$ are of the form $T_P$ for some indecomposable $P\in\P$ or $J_P[r]$ for some indecomposable $P\in\P$ and some $r\in\{0,1,\cdots,m-2\}$; and all indecomposable injective objects in $C^m(\P)$ are of the form $S_P$ for some indecomposable $P\in\P$ or $J_P[r]$ for some indecomposable $P\in\P$ and some $r\in\{0,1,\cdots,m-2\}$. Thus, for $m\geq2$, $C^m(\P)$ is not Frobenius.
For $m\geq2$ and any morphism $f:Q\ra P$~of projectives, define $T_f=(M_i,d_i)\in \mathcal{C}^m(\P)$ by
$$M_i=\begin{cases} Q\;\;&\text{~$i=m-1$};\\
                    P\;\;&\text{~$i=m$};\\
                     0 &\text{otherwise,}\end{cases}\qquad
 d_i=\begin{cases} f\;\;&\text{~$i=m-1$};\\
                     0 &\text{otherwise.}\end{cases}$$
So for each projective object $P$, we have that $T_{\id_P}=J_P$.

\section{Hall algebras of $m$-cyclic complexes and $m$-term complexes}
In this section, let $m\geq1$, we study the relation between Hall algebras of $m$-cyclic complexes and $m$-term complexes.

Given objects $L,M,N \in \mathcal{A}$, let $\Ext_\mathcal{A}^1(M,N)_L \subset \Ext_\mathcal{A}^1(M,N)$ be the subset consisting of those equivalence classes of short exact sequences with middle term $L$.
\begin{definition}\label{Hall algebra of abelian category}
The \emph{Hall algebra} $\mathcal {H}(\mathcal{A})$ of $\mathcal{A}$ is the vector space over $\mathbb{C}$ with basis elements $[M] \in \Iso(\mathcal{A})$, and with the multiplication defined by
\[[M] \diamond [N] = \sum\limits_{[L] \in \Iso(\mathcal{A})} {\frac{{|\Ext_\mathcal{A}^1{{(M,N)}_L}|}}{{|\Hom_\mathcal{A}(M,N)|}}} [L].\]
\end{definition}
\begin{remark}
Given objects $L,M,N\in \A$, set
$$g_{MN}^L:=|\{N'\subset L~|~N'\cong N, L/N'\cong M\}|.$$ It is clear that
$$g_{MN}^L=\frac{|W_{MN}^L|}{a_Ma_N},$$ where
$$W_{MN}^L=\{(\varphi,\psi)~|~\xymatrix{0\ar[r]&N\ar[r]^-{\varphi}&L\ar[r]^-{\psi}&M\ar[r]&0}~\text{is~exact~in}~\A\}.$$
By the Riedtmann--Peng formula \cite{Riedtmann,Peng},
\begin{equation}\label{RPGS}g_{MN}^{L}=\frac{|\Ext^1_{\A}(M,N)_{L}|}{|\Hom_{\A}(M,N)|}\cdot \frac{a_{L}}{a_{M}a_{N}}.\end{equation}
Thus, \begin{equation}\label{djdy}[M] \diamond [N] = \sum\limits_{[L] \in \Iso(\mathcal{A})} {\frac{{|W_{MN}^L|}}{{a_L}}} [L].\end{equation}
In fact, in terms of alternative generators $[[M]]=\frac{[M]}{a_M}$, the product takes the form
$$[[M]]\diamond [[N]]= \sum\limits_{[L] \in \Iso(\mathcal{A})}g_{MN}^L[[L]],$$
which is the definition used, for example, in \cite{R90a,Sc}.
\end{remark}

Let $\H(C_m(\A))$ (resp. $\H(C^m(\A))$) be the Hall algebra of the abelian category $C_m(\A)$ (resp. $C^m(\A)$) as defined in Definition \ref{Hall algebra of abelian category}. Let $\H(C_m(\P))$ (resp. $\H(C^m(\P))$) be the subspace of $\H(C_m(\A))$ (resp. $\H(C^m(\A))$) spanned by the isomorphism classes of objects in $C_m(\P)$ (resp. $C^m(\P)$). Since $C_m(\P)$ (resp. $C^m(\P)$) is closed under extensions, $\H(C_m(\P))$ (resp. $\H(C^m(\P))$) is a subalgebra of the Hall algebra $\H(C_m(\A))$ (resp. $\H(C^m(\A))$).

Define $\mathcal {I}$ to be the subspace of $\H(C_m(\P))$ spanned by elements $[M_\circ]$, where $M_\circ=(M_i,d_i)\in C_m(\P)$ with $d_0\neq0$. Denote by $\mathcal {I}'$ the complement space of $\mathcal {I}$ in $\H(C_m(\P))$, which is spanned by elements $[M_\circ]$ satisfying that $M_\circ=(M_i,d_i)\in C_m(\P)$ with $d_0=0$. Namely, as vector spaces, we have the decomposition $\H(C_m(\P))=\mathcal {I}\oplus\mathcal {I}'$.
\begin{lemma}\label{lixiang}
$\mathcal {I}$ is an ideal of $\H(C_m(\P))$.
\end{lemma}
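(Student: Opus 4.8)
The plan is to show that $\mathcal{I}$ absorbs multiplication from both sides in $\H(C_m(\P))$. Since $\mathcal{I}$ is spanned by the classes $[M_\circ]$ with $M_\circ=(M_i,d_i)$ satisfying $d_0\neq 0$, it suffices to check that whenever one of the two factors has its component $d_0$ nonzero, every complex $L_\circ$ appearing with nonzero coefficient in the product also has its $d_0$ component nonzero. Using the formula \eqref{djdy}, $[M_\circ]\diamond[N_\circ]=\sum_{[L_\circ]}\frac{|W_{M_\circ N_\circ}^{L_\circ}|}{a_{L_\circ}}[L_\circ]$, so the coefficient of $[L_\circ]$ is nonzero exactly when there is a short exact sequence $0\to N_\circ\to L_\circ\to M_\circ\to 0$ in $C_m(\P)$.

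First I would unwind what such a short exact sequence means at the level of the differential in degree $0$. A conflation in $C_m(\P)$ is degreewise split (since all terms are projective objects of $\A$), so in each degree $i$ we may identify $L_i\cong N_i\oplus M_i$, and under this identification the differential $d_i^{L}$ has the block form $\begin{pmatrix} d_i^{N} & h_i \\ 0 & d_i^{M}\end{pmatrix}$ for suitable $h_i\colon M_i\to N_{i+1}$. In particular the degree-$0$ differential of $L_\circ$ is $d_0^{L}=\begin{pmatrix} d_0^{N} & h_0 \\ 0 & d_0^{M}\end{pmatrix}$. Now if $[M_\circ]\in\mathcal{I}$, i.e.\ $d_0^{M}\neq 0$, then the bottom-right block is nonzero, hence $d_0^{L}\neq 0$, so $[L_\circ]\in\mathcal{I}$; this shows $\mathcal{I}\diamond\H(C_m(\P))\subseteq\mathcal{I}$. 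Symmetrically, if $[N_\circ]\in\mathcal{I}$, i.e.\ $d_0^{N}\neq 0$, the top-left block is nonzero, so again $d_0^{L}\neq 0$ and $[L_\circ]\in\mathcal{I}$, giving $\H(C_m(\P))\diamond\mathcal{I}\subseteq\mathcal{I}$. Together with $\mathcal{I}$ being a subspace (which is part of the definition), this proves $\mathcal{I}$ is a two-sided ideal.

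The only genuine point to be careful about is the degreewise-split claim and the resulting block-triangular shape of the differentials: one must verify that for a short exact sequence in the exact category $C_m(\P)$ (with component-wise exact structure), each degree splits as a sequence of $\A$-objects because the $M_i$ are projective in $\A$, and then that compatibility of the maps with the differentials forces the strictly upper-triangular correction terms $h_i$ and the vanishing of the lower-left block. Once this normal form is in hand, the degree-$0$ argument above is immediate, so I expect no real obstacle beyond recording this standard description of conflations in $C_m(\P)$. For $m=1$ the same reasoning applies verbatim, since the relevant "$d_0$" is still the block determined by the two factors. $\square$
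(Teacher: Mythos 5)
Your proposal is correct and follows essentially the same route as the paper: both reduce the claim to showing that any extension $0\to N_\circ\to L_\circ\to M_\circ\to 0$ contributing to the product has $b_0\neq 0$ whenever $c_0\neq 0$ or $d_0\neq 0$. The only cosmetic difference is that you invoke the degreewise splitting (using projectivity of the components) to get the block upper-triangular form of $d_0^L$, whereas the paper gets the same conclusion more directly from the chain-map identities $f_1c_0=b_0f_0$ and $d_0g_0=g_1b_0$ together with $f_1$ being monic and $g_0$ epic, without needing any splitting.
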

\begin{proof}
We only prove for $m>1$, since the case for $m=1$ can be similarly proved.
Let $m>1$ and $N_\circ=(N_i,c_i)\in C_m(\P)$ with $c_0\neq0$, i.e., $[N_\circ]\in\mathcal {I}$. For any $M_\circ=(M_i,d_i)\in C_m(\P)$, consider the short exact sequence
$$\xymatrix{0\ar[r]&N_\circ\ar[r]^-f&L_\circ\ar[r]^-g&M_\circ\ar[r]&0,}$$
where $L_\circ=(L_i,b_i)\in C_m(\P)$. Then we have that $f_1c_0=b_0f_0$. Assume that $b_0=0$, thus $f_1c_0=0$. We get that $c_0=0$, since $f_1$ is injective. This is a contradiction, and thus we conclude that $b_0\neq0$. That is, $[M_\circ]\diamond[N_\circ]\in\mathcal {I}$. Similarly, we prove that $[N_\circ]\diamond[M_\circ]\in\mathcal {I}$.
Hence, $\mathcal {I}$ is an ideal of $\H(C_m(\P))$.
\end{proof}

Given $M_\circ=(M_i,d_i)\in C_m(\P)$, we fix an object $M_\bullet\in C^m(\P)$ defined by
$$M_\bullet=\xymatrix{M_1\ar[r]^-{d_1}&M_2\ar[r]^-{d_2}&\cdots \ar[r]&M_{m-1}\ar[r]^-{d_{m-1}}&M_{m},}$$
where $M_m:=M_0$. In particular, for $M_\circ=(M_0,d_0)\in C_1(\P)$, $M_\bullet:=M_0\in C^1(\P)$.
\begin{theorem}
$(1)$~There exists a surjective homomorphism of algebras
$$\xymatrix{\chi:\H(C_m(\P))\ar@{->>}[r]&\H(C^m(\P)).}$$

$(2)$~There exists an isomorphism of algebras $$\xymatrix{\rho:\H(C_m(\P))/\mathcal {I}\ar[r]^-{\cong}&\H(C^m(\P)).}$$
\end{theorem}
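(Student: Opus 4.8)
The strategy is to construct $\chi$ explicitly on basis elements by the assignment $[M_\circ] \mapsto [M_\bullet]$, where $M_\bullet$ is the $m$-term complex associated to $M_\circ$ as defined just before the statement (replacing the wrap-around differential $d_0 : M_0 \to M_1$ by the zero map and relabeling $M_0$ as $M_m$). I would first check that this is well-defined, i.e. that $M_\circ \cong N_\circ$ in $C_m(\P)$ implies $M_\bullet \cong N_\bullet$ in $C^m(\P)$, which is immediate since an isomorphism of cyclic complexes restricts to an isomorphism of the truncated data. Surjectivity is then also clear: every $m$-term complex $(N_i, c_i)_{1\le i\le m}$ over $\P$ is of the form $N_\bullet$ for the cyclic complex obtained by adjoining $d_0 = 0 : N_m \to N_1$, and one checks $d_1 d_0 = 0$ and $d_0 d_{m-1} = 0$ hold trivially.

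The heart of part (1) is that $\chi$ is an algebra homomorphism. Using the form \eqref{djdy} of the Hall product, this reduces to comparing, for fixed $[M_\bullet], [N_\bullet], [L_\bullet]$ in $C^m(\P)$, the structure constant $|W^{L_\bullet}_{M_\bullet N_\bullet}|/a_{L_\bullet}$ in $\H(C^m(\P))$ against the sum $\sum |W^{L_\circ}_{M_\circ N_\circ}|/a_{L_\circ}$ over all $[L_\circ]$ with $L_\bullet' \cong L_\bullet$, where $M_\circ, N_\circ$ are the fixed chosen lifts. The key bijective input is that a short exact sequence $0 \to N_\circ \to L_\circ \to M_\circ \to 0$ in $C_m(\P)$ with $M_\circ, N_\circ$ the distinguished lifts (so $c_0 = 0$ and $d_0 = 0$) forces $b_0 : L_0 \to L_1$ to have image killed by $L_1 \to M_1$ and to vanish on $N_0$; however unlike in Lemma~\ref{lixiang} it need not be zero — rather the fibre of possible $b_0$'s is a torsor over $\Hom_\A(M_0, N_1)$ (extension data in degree zero), and summing over the isomorphism classes $[L_\circ]$ lying over $[L_\bullet]$ together with the automorphism-group bookkeeping $a_{L_\circ}$ vs. $a_{L_\bullet}$ exactly accounts for this. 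I would organize this as a bijection between $W^{L_\bullet}_{M_\bullet N_\bullet}$-data enlarged by $\Hom_\A(M_0,N_1)$ and the disjoint union of the $W^{L_\circ}_{M_\circ N_\circ}$; a clean way is to observe that $C_m(\P)$ and $C^m(\P)$ differ only in one extra morphism slot $d_0$, so Hall numbers differ by a controlled $\Hom/\Aut$ factor in that slot, and the normalization in \eqref{djdy} absorbs it.

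For part (2), the plan is to apply the first isomorphism theorem: $\chi$ is surjective by (1), so it suffices to identify $\Ker\chi$ with $\mathcal{I}$. One inclusion, $\mathcal{I} \subseteq \Ker\chi$: if $M_\circ = (M_i,d_i)$ has $d_0 \neq 0$, I want $\chi([M_\circ]) = 0$; but here one must be careful, because $\chi$ as defined on the distinguished lifts sends $[M_\circ] \mapsto [M_\bullet]$, which is nonzero. The resolution is that $\chi$ should be defined linearly so that on a general $[M_\circ]$ (not the distinguished lift) it records the class $[\bar M_\bullet]$ of the complex with $d_0$ replaced by $0$, and then the correct statement uses the filtration/reduction: $[M_\circ]$ with $d_0 \neq 0$ is congruent modulo lower terms — but actually the cleanest route is to invoke Lemma~\ref{lixiang}: since $\mathcal{I}$ is an ideal and $\H(C_m(\P)) = \mathcal{I} \oplus \mathcal{I}'$ as vector spaces, I would show $\chi$ restricted to $\mathcal{I}'$ is a linear isomorphism onto $\H(C^m(\P))$ (the basis $[M_\circ]$ with $d_0 = 0$ maps bijectively to the basis of $C^m(\P)$) and $\chi(\mathcal{I}) = 0$, the latter because an object with $d_0 \neq 0$ should be declared to map to $0$ under $\chi$ — consistency with being an algebra map then has to be rechecked, using that $\mathcal{I}$ is an ideal so products landing in $\mathcal{I}$ are sent to $0$ on both sides. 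Then $\rho := \bar\chi$ on the quotient is bijective.

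The main obstacle I anticipate is the homomorphism property in part (1): specifically, the careful bijective matching of Hall structure constants across the two categories, tracking how the extra degree-zero differential contributes a $\Hom_\A(M_0,N_1)$-torsor and verifying that the $a_{L_\circ}$ versus $a_{L_\bullet}$ automorphism factors make the normalized counts agree. Once that identity is nailed down, well-definedness, surjectivity, and the passage to the quotient in (2) are formal, with Lemma~\ref{lixiang} doing the bookkeeping.
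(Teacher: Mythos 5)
There is a genuine error at the heart of your part (1). You define $\chi$ on \emph{every} basis element by $[M_\circ]\mapsto[M_\bullet]$ (truncation, discarding $d_0$ whether or not it vanishes), and you claim that the extra degree-zero extension data --- the possible nonzero $b_0$ in a short exact sequence whose ends have $d_0=c_0=0$ --- is exactly accounted for by summing over the lifts $[L_\circ]$ of $[L_\bullet]$ together with the automorphism normalization in \eqref{djdy}. That identity is false, and the truncation map is not an algebra homomorphism. Take $m=2$ and $\A=\P$ the category of finite-dimensional $k$-vector spaces, $M_\circ=(M_0=k,\,M_1=0)$ and $N_\circ=(N_0=0,\,N_1=k)$, both with zero differentials. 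In any extension $L_\circ$ of $M_\circ$ by $N_\circ$ one is forced to have $b_1=0$, while $b_0\in\Hom_k(k,k)$ is arbitrary; since $|\Hom_{C_2(\P)}(M_\circ,N_\circ)|=1$, one gets $[M_\circ]\diamond[N_\circ]=[M_\circ\oplus N_\circ]+(q-1)[L'_\circ]$, where $L'_\circ$ has $b_0\neq0$. Both summands truncate to $M_\bullet\oplus N_\bullet$, so your $\chi$ sends the product to $q\,[M_\bullet\oplus N_\bullet]$, whereas in $C^2(\P)$ the only extension of $M_\bullet=(0\to k)$ by $N_\bullet=(k\to 0)$ is split, so $\chi([M_\circ])\diamond\chi([N_\circ])=[M_\bullet\oplus N_\bullet]$. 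The structure constants differ by the factor $q=|\Hom_\A(M_0,N_1)|$, and nothing in \eqref{djdy} can absorb it: when $d_0=c_0=0$ one even has $\Hom_{C_m(\P)}(M_\circ,N_\circ)\cong\Hom_{C^m(\P)}(M_\bullet,N_\bullet)$, so there is no compensating $\Hom$-factor on the other side. (Also, the allowed $b_0$ form in general only a subset of $\Hom_\A(M_0,N_1)$, because of the constraints $b_1b_0=0$ and $b_0b_{m-1}=0$, so even the torsor claim is not quite right.)

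The correct definition --- which you drift toward only in part (2) --- is to set $\chi([M_\circ])=[M_\bullet]$ when $d_0=0$ and $\chi([M_\circ])=0$ when $d_0\neq0$, i.e.\ $\chi$ kills $\mathcal I$ by fiat rather than trying to push the $d_0\neq0$ classes forward. With this definition the homomorphism property is proved by \emph{discarding}, not by your matching identity: if $d_0\neq0$ or $c_0\neq0$, then $[M_\circ]\diamond[N_\circ]\in\mathcal I$ by Lemma \ref{lixiang} and both sides vanish; if $d_0=c_0=0$, then in the expansion of $[M_\circ]\diamond[N_\circ]$ the terms with $b_0\neq0$ lie in $\mathcal I$ and are annihilated by $\chi$, while for the terms with $b_0=0$ the sets $W_{M_\circ N_\circ}^{L_\circ}$ and $W_{M_\bullet N_\bullet}^{L_\bullet}$ are in bijection and $a_{L_\circ}=a_{L_\bullet}$, so the surviving coefficients agree. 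Your part (2) acknowledges that after redefining $\chi$ the homomorphism property ``has to be rechecked,'' but you never carry out that check, and the only verification you offer (the torsor argument in part (1)) is the false identity above. Once the definition and this homomorphism argument are fixed, your remarks on well-definedness, surjectivity, $\chi(\mathcal I)=0$, and the induced isomorphism $\rho$ on $\H(C_m(\P))/\mathcal I$ (sending the basis $\{[M_\circ]+\mathcal I:\,d_0=0\}$ to the basis of $\H(C^m(\P))$) are fine and agree with the paper.
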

\begin{proof}
$(1)$~Let $\chi:\H(C_m(\P))\longrightarrow\H(C^m(\P))$ be the linear map defined on basis elements by
$$\chi([M_\circ])=\begin{cases} [M_\bullet]\;\;&\text{if~$d_0=0$}\\
                     0 &\text{otherwise,}\end{cases}$$
where $M_\circ=(M_i,d_i)\in C_m(\P)$.
For any objects $M_\circ=(M_i,d_i), N_\circ=(N_i,c_i)$ in $C_m(\P)$, if $M_\circ\cong N_\circ$, then $d_0=0$ if and only if $c_0=0$; assume that $d_0=0$, in this case $M_\bullet\cong N_\bullet$. Hence, $\chi$ is well defined.

Given objects $M_\circ=(M_i,d_i),N_\circ=(N_i,c_i)\in C_m(\P)$, on the one hand, if $d_0\neq0$ or $c_0\neq0$, i.e., $\chi([M_\circ])=0$ or $\chi([N_\circ])=0$, thus $\chi([M_\circ])\diamond\chi([M_\circ])=0$. By Lemma \ref{lixiang} we have that $[M_\circ]\diamond[N_\circ]\in\mathcal {I}$, thus $\chi([M_\circ]\diamond[N_\circ])=0$.

On the other hand, if $d_0=0$ and $c_0=0$, then by (\ref{djdy}), we obtain that
$$[M_\circ]\diamond[N_\circ]=\sum\limits_{[L_\circ]:L_\circ=(L_i,b_i)~~\text{with}~~b_0=0}\frac{|W_{M_\circ N_\circ}^{L_\circ}|}{a_{L_\circ}}[L_\circ]+x,$$ where $x\in\mathcal {I}$. Since $\chi(x)=0$, we obtain that $$\chi([M_\circ]\diamond[N_\circ])=\sum\limits_{[L_\circ]:L_\circ=(L_i,b_i)~~\text{with}~~b_0=0}\frac{|W_{M_\circ N_\circ}^{L_\circ}|}{a_{L_\circ}}[L_\bullet]$$
It is clear that there exists a bijection between $W_{M_\circ N_\circ}^{L_\circ}$ and $W_{M_\bullet N_\bullet}^{L_\bullet}$, and $a_{L_\circ}=a_{L_\bullet}$. Hence, 
\begin{flalign*}\sum\limits_{[L_\circ]:L_\circ=(L_i,b_i)~~\text{with}~~b_0=0}\frac{|W_{M_\circ N_\circ}^{L_\circ}|}{a_{L_\circ}}[L_\bullet]&=\sum\limits_{[L_\bullet]}\frac{|W_{M_\bullet N_\bullet}^{L_\bullet}|}{a_{L_\bullet}}[L_\bullet]\\&=\chi([M_\circ])\diamond\chi([M_\circ]).\end{flalign*}
Therefore, $\chi$ is a homomorphism of algebras.

For any $M_\bullet=(M_i,d_i)\in C^m(\P)$, take $M_\circ=(M_i,d_i)\in C_m(\P)$ with $M_0:=M_m$ and $d_0=0$, then $\chi([M_\circ])=[M_\bullet]$, and thus $\chi$ is surjective.

$(2)$~Since $\chi(\mathcal {I})=0$, $\chi$ induces a surjective homomorphism of algebras
$$\xymatrix{\rho:\H(C_m(\P))/\mathcal {I}\ar@{->>}[r]&\H(C^m(\P)).}$$
The injectivity of $\rho$
follows from the fact that $\rho$ sends the basis $\{[M_\circ]+\mathcal {I}~|~M_\circ=(M_i,d_i)\in C_m(\P)~~\text{with}~~d_0=0\}$ of $\H(C_m(\P))/\mathcal {I}$ to a basis of $\H(C^m(\P))$.
\end{proof}
%
%

\section{Indecomposable objects in $C_m(\P)$ and $C^m(\P)$}
From now onwards, we always assume that $\A$ is hereditary until the end of the entire paper. For each object $M\in\A$, according to \cite[Section 4.1]{Br}, it has a minimal projective resolution\footnote{The notations $P_M$ and $\Omega_M$ will be used throughout the paper.}
\begin{equation}\label{mpr}
\xymatrix{0\ar[r]&\Omega_M\ar[r]^{\delta_M}&P_M\ar[r]&M\ar[r]&0.}
\end{equation}
Moreover, we have the following well-known
\begin{lemma}\label{jx}
Given $M\in\A$, each projective resolution of $M$ is isomorphic to a resolution of the form
\begin{equation*}
\xymatrix{0\ar[r]&\Omega_M\oplus R\ar[r]^{\delta_M\oplus1}&P_M\oplus R\ar[r]&M\ar[r]&0,}
\end{equation*}
for some $R\in\P$ and some minimal projective resolution \begin{equation*}
\xymatrix{0\ar[r]&\Omega_M\ar[r]^{\delta_M}&P_M\ar[r]&M\ar[r]&0.}
\end{equation*}
\end{lemma}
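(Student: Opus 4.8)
The plan is to reduce an arbitrary projective resolution to the minimal one by a standard splitting argument, using the fact that $\A$ is hereditary (so kernels of surjections from projectives are again projective) and that $\P$ is Krull--Schmidt. Suppose $0 \to K \xrightarrow{\iota} P \xrightarrow{\pi} M \to 0$ is any projective resolution of $M$, and fix the minimal one $0 \to \Omega_M \xrightarrow{\delta_M} P_M \to M \to 0$ as in (\ref{mpr}). First I would invoke Schanuel's lemma, which gives an isomorphism $K \oplus P_M \cong \Omega_M \oplus P$. The goal is to upgrade this abstract isomorphism into the concrete statement that $P \cong P_M \oplus R$ and $K \cong \Omega_M \oplus R$ for some $R \in \P$, \emph{and} that under suitable identifications the map $\iota$ becomes $\delta_M \oplus \id_R$.

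The key steps, in order: (1) Since $\pi: P \to M$ is a projective cover composed with the identity — more precisely, since $P_M \to M$ is a \emph{projective cover}, the surjection $\pi$ factors as $P \cong P_M \oplus R \xrightarrow{(p,\,0)} M$, where $p: P_M \to M$ is the minimal cover and $R = \Ker(P \to P_M)$ sits as a direct summand because $P_M \to M$ being a cover forces the complement to map to zero; here one uses that a surjection onto $M$ from a projective splits off the projective cover as a direct summand (standard for the radical/top in a Krull--Schmidt category, which $\A$ is since it is finitary). (2) With $P = P_M \oplus R$ chosen so that $\pi|_R = 0$, the kernel $K = \Ker(\pi)$ decomposes: $\Ker(\pi) \cong \Ker(p) \oplus R = \Omega_M \oplus R$, and under this decomposition the inclusion $\iota: K \hookrightarrow P$ is precisely $\delta_M \oplus \id_R$ up to the chosen isomorphisms. (3) Finally, record that $R$ is projective (it is a summand of the projective $P$), so the resolution has exactly the claimed shape $0 \to \Omega_M \oplus R \xrightarrow{\delta_M \oplus 1} P_M \oplus R \to M \to 0$.

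The main obstacle I expect is step (1): carefully justifying that an \emph{arbitrary} projective surjection $\pi: P \twoheadrightarrow M$ decomposes compatibly with the projective cover, i.e. that one can choose the isomorphism $P \cong P_M \oplus R$ so that $\pi$ is $(p,0)$ rather than merely $(p, \phi)$ for some $\phi: R \to M$. The resolution of this is that $P_M \to M$ is a projective cover means $\Ker(P_M \to M) \subseteq \rad P_M$; lifting $\id_M$ through $\pi$ and through $p$ in both directions and using that an idempotent-splitting in a Krull--Schmidt category together with the "superfluous kernel" property of covers forces any $\phi: R \to M$ to be absorbable into a change of basis on $P_M \oplus R$ — equivalently, one first splits off a maximal projective summand of $\Ker\pi$ on which $\iota$ is "trivial," and what remains is forced to be a minimal resolution by uniqueness of projective covers. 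Once step (1) is in place, steps (2) and (3) are routine diagram-chasing, and the lemma follows.
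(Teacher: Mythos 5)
The paper offers no proof of this lemma to compare against: it is stated as ``well-known'', with the minimal resolution \eqref{mpr} taken from \cite[Section 4.1]{Br}. Judged on its own, your plan is essentially the standard argument and is correct in outline, but two points should be tightened. First, the appeal to Schanuel's lemma is dead weight: the isomorphism $K\oplus P_M\cong \Omega_M\oplus P$ is never used afterwards, and it cannot by itself (even with Krull--Schmidt cancellation) be upgraded to the compatible decomposition you need, so the entire burden falls on your step (1). Second, step (1) becomes clean, and your worry about $\pi=(p,\phi)$ versus $(p,0)$ disappears, if you build the decomposition from a comparison map instead of fixing one beforehand: given a resolution $0\to K\xrightarrow{\iota}P\xrightarrow{\pi}M\to 0$, lift $\pi$ through the minimal epimorphism $p\colon P_M\to M$ to get $f\colon P\to P_M$ with $pf=\pi$; superfluousness of $\Ker p$ --- this is the projective-cover property that ``minimal'' means here and that \cite[Section 4.1]{Br} supplies, and you should invoke it explicitly rather than bury it in the phrase ``standard for the radical/top'' --- forces $f$ to be an epimorphism; projectivity of $P_M$ then splits $f$, giving $P\cong P_M\oplus R$ with $R=\Ker f$ projective, and on this decomposition $\pi=(p,0)$ automatically because $\pi|_{\Ker f}=pf|_{\Ker f}=0$. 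Your steps (2)--(3) then go through verbatim: $\Ker\pi\cong\Ker p\oplus R=\Omega_M\oplus R$ with inclusion $\delta_M\oplus\id_R$. (Your alternative fix, absorbing $\phi$ by a change of basis, also works, but it needs neither Krull--Schmidt nor superfluousness: lift $\phi\colon R\to M$ through $p$ using projectivity of $R$ and compose with the resulting elementary automorphism of $P_M\oplus R$.)
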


We define objects $C_M:=C_{\delta_M}\in C_m(\P)$ for $m\geq0$ and $T_M:=T_{\delta_M}\in C^m(\P)$ for $m\geq2$. By Lemma \ref{jx}, we know that any two minimal projective resolutions of $M$ are isomorphic, so $C_M$ and $T_M$ are well defined up to isomorphism.

Now, let us give characterizations of indecomposable objects in $C_m(\P)$ and $C^m(\P)$.
\begin{lemma}{\rm(\cite[Lemma 2.3]{ChenD})}\label{dx1}
For $m\geq0$,
the objects $C_M[r]$ and $K_P[r]$, where $r\in\mathbb{Z}_m$, $M\in\A$ is indecomposable and $P\in\P$ is indecomposable, provide a complete set of indecomposable objects in $C_m(\P)$. Moreover, all $K_P[r]$ are the whole indecomposable projective-injective objects in $C_m(\P)$.
\end{lemma}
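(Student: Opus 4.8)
The plan is to reduce the classification of indecomposables in $C_m(\P)$ to the (well-understood) classification of indecomposables in the module category $\A$ via the minimal projective resolution, exploiting that $\A$ is hereditary. First I would observe that any $m$-cyclic complex $M_\circ=(M_i,d_i)$ over $\P$ with some $d_j=0$ can be ``cut'' at that spot: setting $j$ (mod $m$) as the leftmost nonzero position, $M_\circ$ becomes, up to the shift $[-j]$, an honest bounded complex $0\to M_0\xrightarrow{d_0} M_1\to\cdots\to M_{m-1}\to 0$ over $\P$. Since $\A$ is hereditary, such a complex decomposes into its ``homology pieces'': more precisely, a bounded complex of projectives is determined up to isomorphism by its images and kernels, and because $\operatorname{gl.dim}\A\le 1$ every such complex is a direct sum of two-term complexes $(\Omega_N\xrightarrow{\delta_N}P_N)$ (which are the $C_N$, suitably shifted) and contractible pieces $(P\xrightarrow{\id}P)$ (which are the $K_P$, suitably shifted), together with stalk summands $P$ in degree $i$ — but a stalk $P$ is itself $C_P$ since $P$ has the trivial resolution $0\to 0\to P\to P\to 0$, i.e.\ $\Omega_P=0$, $P_P=P$. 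This handles all complexes not concentrated ``around the clock.''

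Next I would treat the genuinely cyclic case, i.e.\ complexes $M_\circ$ with all $d_i\neq 0$. The key point is that over a hereditary $\A$, one can always split off a contractible summand at some spot unless the complex is ``rigid.'' Concretely, I would argue by induction on $\sum_i \operatorname{rk}(M_i)$ (or on the total composition length of the $M_i$): pick any differential $d_i$, factor it as $M_i\twoheadrightarrow \operatorname{im}d_i\hookrightarrow M_{i+1}$; since $M_{i+1}\in\P$ the submodule $\operatorname{im}d_i$ has projective cover, and since $d_{i+1}d_i=0$ one gets a sub-projective $P'\subseteq M_{i+1}$ through which $d_i$ factors and which $d_{i+1}$ kills, allowing a direct-sum decomposition $M_{i+1}\cong P'\oplus M_{i+1}''$ that splits off a two-term contractible complex $K_{P'}[\,\cdot\,]$ provided $P'$ also splits off the \emph{source} side at degree $i$. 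When no such splitting is available — the obstruction being exactly that the images and kernels ``interlock'' all the way around — one shows the complex must have the form $C_M[r]$ for an indecomposable $M\in\A$, by reading off $M=\operatorname{coker}(d_{m-1})$ and verifying via Lemma~\ref{jx} that the remaining data is forced to be the minimal projective resolution of $M$. Conversely, one must check that each $C_M[r]$ (for $M$ indecomposable) and each $K_P[r]$ (for $P$ indecomposable) is indecomposable and that these are pairwise non-isomorphic; indecomposability of $C_M$ follows because $\End(C_M)$ maps onto $\End(M)$ (a local ring) with nilpotent kernel, using minimality of the resolution, and indecomposability of $K_P$ is clear. The projective-injective claim is \cite[Lemma 2.3]{ChenD} combined with the observation from Section~2 that the $K_P[r]$ exhaust the indecomposable projectives and injectives, so being both forces the list to coincide.

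The main obstacle I anticipate is the cyclic case: making precise and rigorous the ``split off a contractible summand or else it is $C_M[r]$'' dichotomy, i.e.\ showing that the only obstruction to reducing a cyclic complex is the minimal-resolution configuration. This is where one genuinely uses $\operatorname{gl.dim}\A=1$ (so that $\Omega_M$ is projective and the resolution is two-term) and the Krull--Schmidt property of $\P$; a clean way to organize it is to pass to the stable category or to use that, for projectives, $\operatorname{Hom}(d_i):\operatorname{Hom}(M_{i+1},-)\to\operatorname{Hom}(M_i,-)$ detects splittings, reducing everything to linear algebra over the (local) endomorphism rings. Since the statement is quoted verbatim from \cite[Lemma 2.3]{ChenD}, I would in practice simply cite that reference for the full argument and only spell out the hereditary reduction above as motivation; alternatively, if a self-contained proof is wanted, I would carry out the induction on total rank sketched in the previous paragraph, with the base case $m=1$ and ranks $\le 1$ checked by hand.
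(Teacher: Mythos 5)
The paper offers no proof of this lemma at all: it is stated with the attribution to \cite[Lemma 2.3]{ChenD}, and the citation is the proof. So your fallback option (cite Chen--Deng) coincides with what the paper does, and your first paragraph is a reasonable gloss. The problem is the self-contained argument you sketch for what you call the ``genuinely cyclic case''. The dichotomy you build it on --- either a contractible summand $K_{P'}[\,\cdot\,]$ splits off, or else the complex ``must have the form $C_M[r]$ with $M=\Coker(d_{m-1})$'' --- is false as stated: a cyclic complex with all $d_i\neq 0$ and no summand $K_P[r]$ need not be of the form $C_M[r]$; it can be a direct sum of several shifted complexes $C_{N_i}[r_i]$ whose (single) nonzero differentials sit in different degrees, so that every $d_i$ of the total complex is nonzero. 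Reading off $\Coker(d_{m-1})$ only makes sense once you already know the complex is concentrated in degrees $m-1,0$, which is essentially what has to be proved; and the proposed induction on total rank is never actually set up (you do not reduce to indecomposables first, nor explain why the failure of your splitting at one spot forces the global two-term shape).

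The repair is that no dichotomy and no induction are needed: the argument is uniform in all degrees and is exactly the one the paper uses for the parallel statement, Lemma \ref{indobj}. Since $\A$ is hereditary and $M_{i+1}\in\P$, each $\im d_i$ is projective, so every sequence $0\to\Ker d_i\to M_i\to\im d_i\to 0$ splits simultaneously; writing $M_i=\Ker d_i\oplus C_i$ with $d_i|_{C_i}$ injective and $\im d_i\subseteq\Ker d_{i+1}$, the subcomplexes having $C_i$ in degree $i$ and $\Ker d_{i+1}$ in degree $i+1$ decompose $M_\circ$ (for any $m\geq 2$, whether or not any $d_i$ vanishes; for $m=1$ one gets directly $M_\circ\cong C_f$ with $f$ the inclusion $\im d_0\hookrightarrow\Ker d_0$) into two-term complexes with injective differential, and Lemma \ref{jx} identifies each such piece with $C_N[r]\oplus K_R[r]$, where $N$ is the cokernel of the inclusion. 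This also explains why your ``cut at a zero differential'' preliminary step is superfluous. The remaining ingredients in your sketch are fine: a stalk $P$ is $C_P$ because $\Omega_P=0$; $C_M$ is indecomposable because $\End(C_M)$ surjects onto the local ring $\End_{\A}(M)$ with nilpotent kernel by minimality of the resolution; and the identification of the $K_P[r]$ as the indecomposable projective-injectives is the statement already recorded in Section 2 from \cite{ChenD}.
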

\begin{lemma}\label{indobj}
For $m\geq2$,
the objects $S_P$, $T_M[r]$ and $J_P[r]$, where $0\leq r<m-1$, $M\in\A$ is indecomposable and $P\in\P$ is indecomposable, provide a complete set of indecomposable objects in $C^m(\P)$. Moreover, all $J_P[r]$ and $T_P$ are the whole indecomposable projective objects; all $J_P[r]$ and $S_P$ are the whole indecomposable injective objects.
\end{lemma}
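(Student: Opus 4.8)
The proof will run parallel to the analogous statement for $C_m(\P)$ (Lemma \ref{dx1}, \cite[Lemma 2.3]{ChenD}), adapting the argument to the exact category $C^m(\P)$, whose projectives and injectives are already described in the excerpt via \cite[Corollary 3.9]{Bau2}. The first step is to show that the listed objects $S_P$, $T_M[r]$ and $J_P[r]$ (with $P$, $M$ indecomposable) are themselves indecomposable. For $S_P$ and $J_P[r]$ this is immediate from $\End(S_P)\cong\End(P)$ and $\End(J_P[r])\cong\End(P)$, which are local; for $T_M[r]$ I would compute $\End_{C^m(\P)}(T_M)$ directly from the definition $T_M=T_{\delta_M}$, using that $\delta_M:\Omega_M\ra P_M$ is a minimal projective resolution — a chain endomorphism of $T_M$ amounts to a pair $(\varphi,\psi)$ with $\psi\delta_M=\delta_M\varphi$, and minimality forces such a pair to be a scalar plus a nilpotent part, so $\End(T_M)$ is local.

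The second, and main, step is exhaustiveness: every indecomposable object of $C^m(\P)$ is isomorphic to one on the list. Here I would use the description of indecomposable projectives ($T_P$, $J_P[r]$, $r\in\{0,\dots,m-2\}$) to build, for an arbitrary $X_\bullet\in C^m(\P)$, a projective presentation and analyze its structure. The cleanest route is probably the following: given $X_\bullet=(X_i,d_i)$, split off a maximal projective-injective summand (the $J_P[r]$ pieces) so that one may assume $X_\bullet$ has no such summand, then show the remaining complex is built from the resolution pieces $T_M[r]$ together with the "stalk" injective $S_P$. Concretely, one can argue degree by degree: since each $X_i$ is projective and $d_{i+1}d_i=0$, choosing splittings of the images and kernels decomposes $X_\bullet$, up to adding contractible pieces $J_P[r]$, into a direct sum of two-term pieces $T_{f}[r]$ and stalks $S_P$, $T_P$; then each $T_f[r]$ decomposes further according to the decomposition of the map $f$ into a minimal part plus an identity part (Lemma \ref{jx}), the identity part being a $J$-summand and the minimal part being $T_M[r]$ for the appropriate $M = \Coker f$. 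Since $T_P = T_0$ corresponds to the case $M$ projective (where $\Omega_M=0$), the object $T_P$ is subsumed. This also yields the last sentence: no $T_M[r]$ or $S_P$ is projective-injective, while every $J_P[r]$ is; matching against the \cite[Corollary 3.9]{Bau2} list then pins down exactly which are projective and which injective.

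The third step is to verify there are no coincidences among the listed objects, i.e. that the parametrization is by genuinely distinct isomorphism classes: $S_P$, $T_M[r]$, $J_P[r]$ are pairwise non-isomorphic for distinct $(P)$, $(M,r)$, $(P,r)$. This is a matter of comparing which degrees are nonzero and reading off $P$ or $M$ and $r$ from the complex, e.g. $S_P$ is supported in degree $1$ only, $J_P[r]$ in degrees $m-1-r, m-r$ with the differential an isomorphism, and $T_M[r]$ in degrees $m-1-r,m-r$ with differential $\delta_M$ not an isomorphism (unless $M=0$, excluded). The Krull--Schmidt property of $C^m(\P)$ — valid since it is a Hom-finite, idempotent-complete $k$-category — guarantees that the complete list of indecomposables is well defined and that the above local-endomorphism computations suffice.

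**Main obstacle.** The delicate point is the exhaustiveness step: carefully peeling off the projective-injective $J_P[r]$ summands and showing the residual complex splits into the two-term resolution pieces and stalks, without circular reasoning, requires a clean inductive argument on $m$ or a direct homological dévissage. I expect this to be where the bulk of the work lies, essentially mirroring the proof of \cite[Lemma 2.3]{ChenD} but keeping track of the fact that $C^m(\P)$, unlike $C_m(\P)$, is not Frobenius, so the bookkeeping of projectives versus injectives must be done separately at each end of the complex.
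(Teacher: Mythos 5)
Your proposal is correct and follows essentially the same route as the paper: the exhaustiveness argument via degree-by-degree splitting of $0\to\Ker d_i\to M_i\to\im d_i\to 0$ (using heredity), yielding a stalk $S_{\Ker d_1}$ plus two-term complexes with injective differential, each of which splits by Lemma \ref{jx} into a minimal piece $T_{\Coker f}[r]$ and a contractible summand $J_R[r]$, is exactly the paper's proof. Your additional verifications (locality of endomorphism rings, pairwise non-isomorphism, Krull--Schmidt) are points the paper leaves implicit, not a different method.
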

\begin{proof}
We only prove the first statement, since the others have been given in the previous section.
Take an arbitrary object in $C^m(\P)$
$$M_{\bullet}=\xymatrix{M_1\ar[r]^-{d_1}&M_2\ar[r]^-{d_2}&\cdots \ar[r]&M_{m-1}\ar[r]^-{d_{m-1}}&M_{m}.}$$
For each $1\leq i<m$, we have the short exact sequence
$$\xymatrix{0\ar[r]&\Ker d_i\ar[r]^-{\lambda_i}&M_i\ar[r]^-{d_i}&\im d_i\ar[r]&0.}$$
By the hereditary assumption, all the objects appearing in these sequences are projective. Thus the sequences split. That is,
we can assume that $M_i=\Ker d_i\oplus\im d_i$ for each $1\leq i<m$. Writing $M_{\bullet}$ as follows:
$$M_{\bullet}=\xymatrix{\Ker d_1\oplus\im d_1\ar[r]^-{\left({\begin{smallmatrix}0&0\\0&d'_1\end{smallmatrix}}\right)}&\im d_2\oplus\Ker d_2\ar[r]^-{(d'_2,0)}&M_3\ar[r]^-{d_3}&\cdots \ar[r]&M_{m-1}\ar[r]^-{d_{m-1}}&M_{m},}$$
where and elsewhere $d'_i=d_i|_{\im d_i}$ for $1\leq i<m$, we obtain that
$M_\bullet$ is the direct sum of the following two objects
$$M^1_{\bullet}=\xymatrix{\Ker d_1\ar[r]^-{0}&\im d_2\ar[r]^-{d'_2}&M_3\ar[r]^-{d_3}&\cdots \ar[r]&M_{m-1}\ar[r]^-{d_{m-1}}&M_{m}}$$
and $$N^1_{\bullet}=\xymatrix{\im d_1\ar[r]^-{d'_1}&\Ker d_2\ar[r]&0\ar[r]&\cdots \ar[r]&0\ar[r]&0.}$$
Similarly, $M^1_{\bullet}$ is the direct sum of the following two objects
$$M^2_{\bullet}=\xymatrix{\Ker d_1\ar[r]&0\ar[r]&\im d_3\ar[r]^-{d'_3}&M_4\ar[r]^-{d_4}&\cdots \ar[r]&M_{m-1}\ar[r]^-{d_{m-1}}&M_{m}}$$
$$N^2_{\bullet}=\xymatrix{0\ar[r]&\im d_2\ar[r]^-{d'_2}&\Ker d_3\ar[r]&0\ar[r]&\cdots \ar[r]&0.}$$
Repeating this process, we get that $M_\bullet$ has a direct sum decomposition
$$M_\bullet=S_{P}\oplus N^1_{\bullet}\oplus\cdots\oplus N^{m-2}_{\bullet}\oplus N^{m-1}_{\bullet}$$
where $P=\Ker d_1$; for $1\leq i\leq m-2$, $N^i_{\bullet}=(X_j,f_j)$ with $X_j=0$ if $j\notin\{i,i+1\}$, $X_{i}=\im d_{i}$, $X_{i+1}=\Ker d_{i+1}$ and $f_{i}=d'_{i}$; and $N^{m-1}_{\bullet}=(X_j,f_j)$ with $X_j=0$ if $j\notin\{m-1,m\}$, $X_{m-1}=\im d_{m-1}$, $X_{m}=M_{m}$ and $f_{m-1}=d'_{m-1}$.

Noting for each $1\leq i\leq m-1$ the differential $f_{i}$ in $N^i_{\bullet}$ is injective, we have the short exact sequence
$$\xymatrix{0\ar[r]&X_{i}\ar[r]^-{f_{i}}&X_{i+1}\ar[r]&Y_i\ar[r]&0,}$$
where $Y_i:=\Coker f_{i}$. Then, by Lemma \ref{jx}, $N^i_{\bullet}\cong T_{Y_i}[m-i-1]\oplus J_{R_i}[m-i-1]$ for some $R_i\in\P$. Set $r=m-i-1$, since $1\leq i\leq m-1$ we have that $0\leq r<m-1$. Therefore, we complete the proof.\end{proof}

\section{Hall algebras of bounded complexes and derived Hall algebras}
In order to study the Hall algebra of $C^m(\P)$, by reformulating \cite[Theorem 3.2]{ZHC2} we first give a characterization of the Hall algebra of $C^b(\P)$, and then relate it to the derived Hall algebra of $\A$. These are similar to the results given in \cite[Section 5.2]{LinP}.

Let $\H(C^b(\A))$ be the Hall algebra of the abelian category $C^b(\A)$ as defined in Definition \ref{Hall algebra of abelian category}. Let $\H(C^b(\P))$ be the subspace of $\H(C^b(\A))$ spanned by the isomorphism classes of objects in $C^b(\P)$. Since $C^b(\P)$ is closed under extensions, $\H(C^b(\P))$ is a subalgebra of the Hall algebra $\H(C^b(\A))$.

For objects $M,N \in \mathcal{A}$, define \begin{equation}\label{Euler form}
\lr{M,N}:=\dim_k\Hom_{\A}(M,N)-\dim_k\Ext^1_{\A}(M,N).\end{equation}
Since $\A$ is hereditary,
it descends to give a bilinear form
$$\lr{\cdot ,\cdot }: K(\mathcal{A})\times K(\mathcal{A})\longrightarrow \mathbb{Z},$$ known as the \emph{Euler form}.

Define the Hall algebra $\H_{\tw}(\A)$ to be the same vector space as $\H(\A)$, but with the twisted multiplication defined by $$[M]*[N]=q^{\lr{M,N}}\cdot[M]\diamond[N].$$

Given objects $M_\bullet,N_\bullet\in C^b(\P)$, there exists a positive integer $m$ such that $M_\bullet,N_\bullet\in C^m(\P)$. Since the global dimension of $C^m(\P)$ is $m-1$, we obtain that there exists a positive integer $N$ such that for all $i>N$, $\Ext^i_{C^b(\P)}(M_\bullet,N_\bullet)=0$. That is, $C^b(\P)$ is locally homological finite. Thus, the Euler form of $C^b(\P)$
\begin{equation*}
\lr{\cdot ,\cdot }: K(C^b(\P))\times K(C^b(\P))\longrightarrow \mathbb{Z}
\end{equation*}
determined by \begin{equation}\lr{M_\bullet,N_\bullet}=\sum\limits_{i\geq0}(-1)^i\dim_k\Ext^i_{C^b(\P)}(M_\bullet,N_\bullet)\end{equation}
is also well defined.
\begin{lemma}\label{ola}
For any $M,N\in\A$ and $r,l\in\mathbb{Z}$, we have that
\begin{itemize}
\item[(1)] $\Ext_{C^b(\P)}^i(C_M[r],C_N[r])=0$ for any $i\geq2$;
\item[(2)] $\Ext_{C^b(\P)}^1(C_M[r],C_N[r])\cong\Ext_{\A}^1(M,N)$;
\item[(3)] $\Ext_{C^b(\P)}^i(C_M[r],C_N[r+1])=0$ for any $i\geq1$;
\item[(4)] $\lr{C_M[r],C_N[l]}=(-1)^{r-l}\lr{M,N}$, if $r-l\geq1$;
\item[(5)] $\lr{C_M[r],C_N[l]}=0$, if $l-r>1$.
\end{itemize}
\end{lemma}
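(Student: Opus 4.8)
The plan is to compute all the Ext-groups between the complexes $C_M[r]$ and $C_N[l]$ in $C^b(\P)$ directly from projective resolutions, using the fact that $C^b(\P)$ is locally homological finite and that $\A$ is hereditary. The key observation is that $C_M$ is, by construction, the $2$-term complex $\xymatrix{\Omega_M\ar[r]^-{\delta_M}&P_M}$ (placed in the appropriate degrees), i.e. it \emph{is} a minimal projective resolution of $M$ viewed as an object of $C^b(\P)$, but it is itself not projective in $C^b(\P)$ unless $M$ is projective. So the first step is to write down an explicit projective resolution of $C_M[r]$ inside the exact category $C^b(\P)$: using the indecomposable projectives described in Lemmas \ref{dx1} and \ref{indobj} (the $K_P[r]$'s in the cyclic case, $T_P$ and $J_P[r]$ in the fixed-size case), one shows that there is a short exact sequence $\xymatrix{0\ar[r]&K_{\Omega_M}[r]\ar[r]&K_{P_M}[r]\ar[r]&C_M[r]\ar[r]&0}$ (up to the appropriate sign/shift bookkeeping), exhibiting $C_M[r]$ as having projective dimension exactly $1$ in $C^b(\P)$. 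This immediately gives the vanishing $\Ext^i=0$ for $i\geq2$ in parts (1) and (3): apply $\Hom_{C^b(\P)}(-,C_N[l])$ to the length-$1$ resolution.

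For part (2), I would apply $\Hom_{C^b(\P)}(-,C_N[r])$ to the resolution $\xymatrix{0\ar[r]&K_{\Omega_M}[r]\ar[r]&K_{P_M}[r]\ar[r]&C_M[r]\ar[r]&0}$ and identify $\Ext^1_{C^b(\P)}(C_M[r],C_N[r])$ as the cokernel of $\Hom(K_{P_M}[r],C_N[r])\to\Hom(K_{\Omega_M}[r],C_N[r])$. Since $K_P[r]$ is the projective cover of $P$ concentrated around degrees $r,r+1$, a morphism $K_P[r]\to C_N[r]$ is determined by a morphism $P\to (C_N)_r$ subject to the differential condition; chasing these identifications one finds the cokernel in question is exactly $\Ext^1_\A(M,N)$ — this is the same kind of computation Bridgeland carries out in \cite{Br} for $2$-cyclic complexes and that was reformulated in \cite{ZHC2}, so I would cite \cite[Theorem 3.2]{ZHC2} to shorten it. For part (3) one checks in addition that when the target is shifted by one, $\Hom_{C^b(\P)}(K_{P_M}[r],C_N[r+1])\to\Hom_{C^b(\P)}(K_{\Omega_M}[r],C_N[r+1])$ is surjective (indeed it should be an isomorphism, forcing $\Ext^0=\Ext^1=0$), again because the only nonzero component of $C_N[r+1]$ in the relevant range is $\Omega_N$ in degree $r$ and this is precisely where $K_{\Omega_M}[r]$ already maps.

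Parts (4) and (5) are then formal consequences. For (4), with $r-l\geq1$: the complexes $C_M[r]$ and $C_N[l]$ are then "separated in homological degree," so $\Hom_{C^b(\P)}(C_M[r],C_N[l])=0$ and the only surviving term in the Euler form is a single Ext-group; using the length-$1$ resolution of $C_M[r]$ and the identification $\Hom_{C^b(\P)}(K_P[r],K_Q[l])\cong\Hom_\A(P,Q)$ when $r>l$ (with the right sign convention coming from the shift functor), one computes $\lr{C_M[r],C_N[l]}$ as an alternating sum of $\dim_k\Hom_\A$ terms involving $P_M,\Omega_M,P_N,\Omega_N$, which collapses via additivity of the Euler form of $\A$ on the short exact sequences $0\to\Omega_M\to P_M\to M\to0$ and $0\to\Omega_N\to P_N\to N\to0$ to $(-1)^{r-l}\lr{M,N}$; one has to track the sign $(-1)^{r-l}$ through the shift functor definition $f_i=(-1)^t d_{i+t}$, but no extra content. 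For (5), when $l-r>1$ the two complexes are separated in the other direction by a gap of more than one degree, and since each $C_M[r]$ "occupies" only two consecutive degrees and has projective dimension $1$ (hence $\Ext^i$ between them vanishes for $i\geq1$ automatically once degrees are far enough apart, and $\Hom=0$ trivially), every term in the Euler form vanishes, giving $\lr{C_M[r],C_N[l]}=0$. The main obstacle is purely bookkeeping: getting the degree placements, the shift signs, and the direction of the inequalities ($r-l\geq1$ versus $l-r>1$ versus the equal case) consistent with the conventions fixed in Section 2, and making sure the projective resolution of $C_M[r]$ in $C^b(\P)$ is written with $K_{\Omega_M}[r]$ and $K_{P_M}[r]$ in exactly the right spots so that the Hom-computations in (2) and (3) land on the nose; once the conventions are pinned down, each item is a short diagram chase plus an application of additivity of the Euler form.
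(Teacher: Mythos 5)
Your argument for (1)--(3) rests on a claim that is false and cannot be repaired: there is no conflation $0\to K_{\Omega_M}[r]\to K_{P_M}[r]\to C_M[r]\to 0$ in $C^b(\P)$, and $C_M[r]$ does not have projective dimension $1$ there. Conflations in $C^b(\P)$ are componentwise short exact, so in the degree where $C_M[r]$ has component $P_M$ your sequence would read $0\to\Omega_M\to P_M\to P_M\to 0$, which splits and forces $\Omega_M=0$ by Krull--Schmidt, and the adjacent degree then forces $P_M\cong\Omega_M$, i.e.\ $M=0$; no re-placement of shifts helps. More structurally, $C^b(\P)=C_0(\P)$ is a \emph{Frobenius} exact category whose projective-injective objects are exactly the contractible complexes $K_P[r]$ (Lemma \ref{dx1}), and $C_M[r]$ is never contractible for $M\neq0$; in a Frobenius category a non-projective object has infinite projective dimension (here its syzygy is $C_M[r-1]$ up to contractible summands, because the suspension of the stable category $K^b(\P)$ is the shift), so no length-one projective resolution of $C_M[r]$ exists. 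Your conclusion that $\Ext^i_{C^b(\P)}(C_M[r],-)$ vanishes for all $i\geq2$ against every $C_N[l]$ is in fact inconsistent with item (4) itself: for $r-l\geq2$ one has $\Ext^{r-l}_{C^b(\P)}(C_M[r],C_N[l])\cong\Hom_{\A}(M,N)$, generally nonzero, and it is precisely this group together with $\Ext^{r-l+1}_{C^b(\P)}(C_M[r],C_N[l])\cong\Ext^1_{\A}(M,N)$ (two surviving terms, not ``a single Ext-group'') that produces $(-1)^{r-l}\lr{M,N}$ in (4).

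The vanishing statements come from the heredity of $\A$, not from any finite resolution inside $C^b(\P)$: the paper's (short) proof applies \cite[Lemma 3.1]{Gor2}, which gives $\Ext^i_{C^b(\P)}(C_M[r],C_N[l])\cong\Hom_{K^b(\P)}(C_M[r],C_N[l+i])$ for $i\geq1$; since $C_M$ and $C_N$ are projective resolutions of $M$ and $N$ and $K^b(\P)$ computes morphisms in $D^b(\A)$, this is $\Hom_{D^b(\A)}(M,N[l-r+i])$, which by heredity equals $\Hom_{\A}(M,N)$, $\Ext^1_{\A}(M,N)$, or $0$ according as $l-r+i=0$, $1$, or neither. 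Items (1)--(3) are then immediate, and (4), (5) follow from the same computation combined with the elementary observation that $\Hom_{C^b(\P)}(C_M[r],C_N[l])=0$ when $r-l\geq1$ or $l-r>1$. If you want to avoid citing Gorsky, you must build the full (infinite) resolution by contractibles and pass to the stable category $K^b(\P)$, which is exactly what that lemma packages; the two-term resolution you propose cannot serve as the starting point.
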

\begin{proof}
For any $i\geq1$, by \cite[Lemma 3.1]{Gor2}, we have that
\begin{equation}\label{ext}
\begin{split}
\Ext_{C^b(\P)}^i(C_M[r],C_N[l])&\cong\Hom_{K^b(\P)}(C_M[r],C_N[l+i])\\
&\cong\Hom_{D^b(\A)}(M,N[l-r+i])\\
&\cong\begin{cases} \Hom_{\A}(M,N)\;\;&\text{~$l-r+i=0$};\\
                    \Ext_{\A}^1(M,N)\;\;&\text{~$l-r+i=1$};\\
                     0 &\text{otherwise.}\end{cases}
\end{split}\end{equation}
Thus, $(1$-$3)$ can be easily proved. Noting that $\Hom_{C^b(\P)}(C_M[r],C_N[l])=0$ if $r-l\geq1$, we prove $(4)$;
Noting that $\Hom_{C^b(\P)}(C_M[r],C_N[l])=0$ if $l-r>1$, we prove $(5)$.
\end{proof}

Define the Hall algebra $\H_{\tw}(C^b(\P))$ to be the same vector space as $\H(C^b(\P))$, but with the twisted multiplication
\begin{equation}[M_\bullet]\ast[N_\bullet]=q^{\lr{M_\bullet,N_\bullet}}\cdot[M_\bullet]\diamond[N_\bullet].\end{equation}
For any $P\in\P$ and $r\in\mathbb{Z}$, since $K_P[r]$ is projective-injective in $C^b(\P)$, we easily obtain that \begin{equation}[K_P[r]]\ast[M_\bullet]=[M_\bullet\oplus K_P[r]]=[M_\bullet]\ast[K_P[r]]\end{equation} for all $M_\bullet\in C^b(\P)$.


Define the Hall algebra $\M\H(\A)$ to be the localization of $\H_{\tw}(C^b(\P))$ with respect to elements $[K_P[r]]$ for all $P\in\P$ and $r\in\mathbb{Z}$.
For each $r\in\mathbb{Z}$ and $\alpha\in K(\A)$, by writing $\alpha=\hat{P}-\hat{Q}$ for some $P,Q\in\P$,
we define $$K_{\alpha,r}=[K_P[r]]\ast[K_Q[r]]^{-1}.$$
For any $\alpha,\beta\in K(\A)$, it is easy to see that
\begin{equation}\label{kejia}
K_{\alpha,r}\ast K_{\beta,r}=K_{\alpha+\beta,r}.\end{equation}
Moreover, for any $M_\bullet\in C^b(\P)$ we have that
\begin{equation}
K_{\alpha,r}\ast[M_\bullet]=[M_\bullet]\ast K_{\alpha,r}.
\end{equation}

Given an object $M\in\A$, for each $r\in\mathbb{Z}$, we define
\begin{equation}
E_{M,r}:=K_{-\hat{\Omega}_M,r}\ast[C_M[r]]\in\M\H(\A).
\end{equation}
Let us reformulate \cite[Proposition 4.4(2)]{ChenD} in the following
\begin{proposition}\label{main1}
For each $r\in\mathbb{Z}$, there exists an embedding of algebras
$$\xymatrix{\psi_r:\H_{\tw}(\A)\ar@{^{(}->}[r]&\M\H(\A),} \xymatrix{[M]\ar@{|->}[r]&E_{M,r}.}$$
\end{proposition}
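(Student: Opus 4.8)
The plan is to verify that the assignment $[M]\mapsto E_{M,r}$ extends to an algebra homomorphism, and then to check injectivity. Since $\H_{\tw}(\A)$ has basis $\{[M]\}$ and the twisted product is governed by the structure constants $g_{AB}^M$ up to the factor $q^{\lr{A,B}}$, the only thing to prove for the homomorphism property is the multiplicativity identity
\[
E_{M,r}\ast E_{N,r}=q^{\lr{M,N}}\sum_{[L]\in\Iso(\A)}g_{MN}^{L}\,E_{L,r}
\]
in $\M\H(\A)$. First I would expand the left side using $E_{M,r}=K_{-\hat\Omega_M,r}\ast[C_M[r]]$, commuting the $K$-elements past the complex classes via the centrality relations already recorded, so that it becomes $K_{-\hat\Omega_M-\hat\Omega_N,r}\ast q^{\lr{C_M[r],C_N[r]}}\,[C_M[r]]\diamond[C_N[r]]$. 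Here I invoke Lemma \ref{ola}(2) (hence $\lr{C_M[r],C_N[r]}=\lr{M,N}$ since $\Hom$ vanishes in equal degrees for $r=r$ is not zero — rather I use part (2) and the formula $\lr{C_M[r],C_N[r]}=\dim\Hom-\dim\Ext^1$ together with $\Ext^1_{C^b(\P)}(C_M[r],C_N[r])\cong\Ext^1_\A(M,N)$ and the homotopy description of $\Hom$). So the twisting factor $q^{\lr{C_M[r],C_N[r]}}$ matches the desired $q^{\lr{M,N}}$ up to a correction I track explicitly.

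The heart of the argument is the Hall-number computation: I must show that the untwisted product $[C_M[r]]\diamond[C_N[r]]$ in $\H(C^b(\P))$, after multiplying by the appropriate $K$-element, collapses onto the terms $[C_L[r]]$ with coefficient matching $g_{MN}^L$. The key combinatorial input is the classification of extensions of $C_N[r]$ by $C_M[r]$ in $C^b(\P)$ (equivalently in $C^m(\P)$ for $m$ large): by Lemma \ref{ola}(2) such extensions are classified by $\Ext^1_\A(M,N)$, and one checks that the middle term of the extension corresponding to a class $\xi\in\Ext^1_\A(M,N)_L$ is, up to adding a contractible complex $K_R[r]$, isomorphic to $C_L[r]$ — this is the analogue of the computation in \cite[Proposition 4.4]{ChenD}, which I am explicitly told to reformulate. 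The bookkeeping of the contractible summand $K_R[r]$ (with $R$ determined by the non-minimality of the resulting projective resolution, cf. Lemma \ref{jx}) is precisely what is absorbed by the factor $K_{-\hat\Omega_M-\hat\Omega_N,r}$ versus $K_{-\hat\Omega_L,r}$: one has $\hat\Omega_M+\hat\Omega_N=\hat\Omega_L+\hat R$ in $K(\A)$ by additivity of the Euler characteristic of minimal resolutions along the short exact sequence, so the $K$-factors conspire to cancel the $[K_R[r]]$ summand after localization. Comparing the surviving Riedtmann--Peng coefficient with \eqref{RPGS} and checking that the $q$-powers from $a_L/(a_M a_N)$, from $|\Hom|$, and from the definition of the twist all combine to leave exactly $q^{\lr{M,N}}g_{MN}^L$ finishes the homomorphism property.

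For injectivity I would argue by a filtration/leading-term argument: order $\Iso(\A)$ so that $E_{M,r}$ has $K_{-\hat\Omega_M,r}\ast[C_M[r]]$ as its unique ``top'' term, and note that distinct $M$ give distinct $[C_M[r]]$ by Lemma \ref{dx1}; since the $K_{\alpha,r}$ are invertible and commute with everything, a nontrivial linear combination $\sum c_M E_{M,r}=0$ would force, after clearing denominators, a nontrivial relation among the linearly independent basis elements $[K_R[r]\oplus C_M[r]]$ of $\H(C^b(\P))$, a contradiction. Alternatively, and more cleanly, one composes $\psi_r$ with the natural projection-type map back to $\H_{\tw}(\A)$ (sending $[K_P[r]]\mapsto$ the relevant central element and $[C_M[r]]\mapsto [M]$) and checks it is a left inverse on the image. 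The main obstacle I anticipate is not the injectivity but the precise matching of all the $q$-exponents in the Hall-number computation — keeping straight the $\dim\Hom$ and $\dim\Ext^1$ contributions in $C^b(\P)$ versus $\A$, and verifying that the shift from $\Omega_M,\Omega_N$ to $\Omega_L,R$ is exactly compensated — so I would set up that calculation carefully, relying on Lemma \ref{ola} and the identities \eqref{kejia} and the centrality of the $K_{\alpha,r}$, and refer to \cite[Proposition 4.4(2)]{ChenD} for the parallel verification in the cyclic setting.
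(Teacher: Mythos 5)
Your overall strategy is sound and is, in substance, the Bridgeland--Chen--Deng computation: the paper itself gives no argument for Proposition \ref{main1}, merely recasting \cite[Proposition 4.4(2)]{ChenD} (note $C^b(\P)=C_0(\P)$ is the case $m=0$ of the cyclic construction), so your reconstruction is a legitimate, more self-contained route. The two ``corrections'' you leave to be tracked do work out, and you should record them explicitly: first, since $\Ext^0$ in $C^b(\P)$ is the space of chain maps (not homotopy classes), one has $\lr{C_M[r],C_N[r]}=\lr{M,N}+\dim_k\Hom_{\A}(P_M,\Omega_N)$, while $|\Hom_{C^b(\P)}(C_M[r],C_N[r])|=q^{\dim_k\Hom_{\A}(P_M,\Omega_N)}\cdot|\Hom_{\A}(M,N)|$, so the two corrections cancel exactly against each other; second, the extension of $C_N[r]$ by $C_M[r]$ with class $\xi\in\Ext^1_{\A}(M,N)_L$ has middle term a projective resolution of $L$ with terms $P_M\oplus P_N$ and $\Omega_M\oplus\Omega_N$, hence isomorphic to $C_L[r]\oplus K_R[r]$ with $P_M\oplus P_N\cong P_L\oplus R$ by Lemma \ref{jx}, and your identity $\hat{\Omega}_M+\hat{\Omega}_N=\hat{\Omega}_L+\hat{R}$ then gives $K_{-\hat{\Omega}_M-\hat{\Omega}_N,r}\ast[K_R[r]]=K_{-\hat{\Omega}_L,r}$, as needed. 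For injectivity, the cleanest argument is the PBW-type basis of $\M\H(\A)$ stated right after the proposition (from \cite[Proposition 4.4(3)]{ChenD}); your ``clear denominators'' variant is fine, but it implicitly uses that the central elements $[K_P[r]]$ are non-zero-divisors in $\H_{\tw}(C^b(\P))$ (so that the localization map is injective), which holds because $[K_P[r]]\ast[M_\bullet]=[M_\bullet\oplus K_P[r]]$ permutes the basis injectively; also, the complexes $K_R[r]\oplus C_M[r]$ for distinct $[M]$ are distinguished by their homology rather than by Lemma \ref{dx1}.

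One slip to fix: with the paper's normalization (basis $[M]$, structure constants $|\Ext^1_{\A}(M,N)_L|/|\Hom_{\A}(M,N)|$), the identity to verify is $E_{M,r}\ast E_{N,r}=\sum_{[L]}q^{\lr{M,N}}\frac{|\Ext^1_{\A}(M,N)_L|}{|\Hom_{\A}(M,N)|}E_{L,r}$ (the first relation in Proposition \ref{guanxi}), not $q^{\lr{M,N}}\sum_{[L]}g_{MN}^{L}E_{L,r}$; the latter is the formula for the rescaled generators $[[M]]=[M]/a_M$. As stated, your target is off by the factor $a_Ma_N/a_L$ from \eqref{RPGS}, which is not a power of $q$ and cannot be absorbed into the twist; the computation you outline in fact produces the coefficient $|\Ext^1_{\A}(M,N)_L|/|\Hom_{\A}(M,N)|$ directly, so simply restate the target accordingly.
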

\begin{remark}
Compared with the modified Ringel--Hall algebra studied in \cite{LinP}, the element $E_{M,r}$ corresponding to $[M]$ needs to be defined by appending the element $K_{-\hat{\Omega}_M,r}$ to $[C_M[r]]$. Actually, in the modified Ringel--Hall algebra, the embedding above is immediate.
\end{remark}
By Lemma \ref{dx1}, applying the arguments similar to those in the proof of \cite[Proposition 4.4(3)]{ChenD}, we obtain
the following
\begin{proposition}
$\M\H(\A)$ has a basis consisting of elements
$$K_{\alpha_r,r}\ast K_{\alpha_{r+1},r+1}\ast\cdots\ast K_{\alpha_l,l}\ast E_{M_r,r}\ast E_{M_{r+1},r+1}\ast\cdots\ast E_{M_l,l},$$
where $r,l\in\mathbb{Z}$, $r\leq l$, $\alpha_i\in K(\A)$ and $M_i\in\A$ for $r\leq i\leq l$.
\end{proposition}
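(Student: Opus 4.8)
The plan is to produce the claimed spanning set first and then to verify linear independence, exactly mirroring the structure of the proof of \cite[Proposition 4.4(3)]{ChenD}, but carrying along the fixed shift window. The starting point is Lemma \ref{dx1}: every object of $C_m(\P)$ with $m$ replaced by $0$, i.e.\ every object of $C^b(\P)$, decomposes as a direct sum of indecomposables of the form $C_M[r]$ (with $M\in\A$ indecomposable) and $K_P[r]$ (with $P\in\P$ indecomposable), over a range of shifts $r$ lying between some $r$ and some $l$. Since in $\H(C^b(\P))$ one has $[M_\bullet\oplus N_\bullet]=$ a nonzero scalar times $[M_\bullet]\diamond[N_\bullet]$ whenever $\Ext^1_{C^b(\P)}(M_\bullet,N_\bullet)=0$ (and in the twisted algebra the scalar is $q^{\lr{M_\bullet,N_\bullet}}$), it suffices to show that one can reorder such a product so that all the $K$-part comes first (grouped by shift), then all the $C$-part (grouped by shift in increasing order), and that within a single shift $r$ the $C_M[r]$'s for varying indecomposable $M$ generate, under $\ast$, the whole image $\psi_r(\H_{\tw}(\A))$, whose basis is $\{E_{M,r}\}_{[M]\in\Iso(\A)}$ by Proposition \ref{main1} together with the PBW-type basis of $\H_{\tw}(\A)$.

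The reordering is controlled by Lemma \ref{ola}. Parts (3) and (5) give $\Ext^i_{C^b(\P)}(C_M[r],C_N[l])=0$ and vanishing of the relevant Euler contributions when $l>r$, so higher-shift summands can be commuted past lower-shift ones up to invertible twist scalars $q^{\lr{\cdot,\cdot}}$; the $K_P[r]$ terms are projective-injective (Lemma \ref{dx1}), so by the displayed identity $[K_P[r]]\ast[M_\bullet]=[M_\bullet]\ast[K_P[r]]$ they are central and can be pulled all the way to the left. This shows every $[M_\bullet]$ lies in the span of products $K_{(\ast)}\ast\cdots\ast(\text{products of }[C_M[r]]\text{'s with }r\text{ increasing})$. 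Next, absorbing $K_{-\hat\Omega_M,r}$ into $[C_M[r]]$ converts $[C_M[r]]$ into $E_{M,r}$ up to a $K$-factor (which again commutes to the front and, via \eqref{kejia}, merges with the other $K_{\alpha,r}$'s into a single $K_{\alpha_r,r}$ per shift). Finally, within a fixed shift $r$, Lemma \ref{ola}(1)--(2) identifies $\Ext$-groups $\Ext^1_{C^b(\P)}(C_M[r],C_N[r])\cong\Ext^1_\A(M,N)$ and kills higher ones, so the subalgebra generated by $\{E_{M,r}\}$ is precisely $\psi_r(\H_{\tw}(\A))$, which by the PBW basis of $\H_{\tw}(\A)$ is spanned by ordered monomials $E_{M_r,r}\ast E_{M_{r+1},r+1}\ast\cdots$ restricted to that shift — but one must be slightly careful: $E_{M,r}$ and $E_{N,r}$ for the \emph{same} $r$ do not commute, so the internal ordering at each shift is the Hall-algebra ordering inherited from $\H_{\tw}(\A)$. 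Collecting these observations yields that the displayed elements span $\M\H(\A)$.

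For linear independence, I would compare dimensions (in the graded sense). The Grothendieck-group grading of $C^b(\P)$ is by $K(C^b(\P))\cong\bigoplus_{r}K(\A)$, and Proposition \ref{main1} says $\psi_r$ is injective with image the span of the $E_{M,r}$'s; combining the injectivity of each $\psi_r$ with the fact that the $K_{\alpha,r}$'s freely generate (over distinct $r$, by \eqref{kejia} and commutativity) a Laurent-type piece, one checks that a nonzero linear combination of the displayed monomials maps under the natural filtration-to-associated-graded procedure to a nonzero combination of $\prod K_{\alpha_i,i}\otimes\bigotimes_r\psi_r(\text{PBW monomial})$, which cannot vanish. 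Equivalently — and this is the cleanest route — one shows the displayed elements are obtained from the standard basis $\{[M_\bullet]\}_{[M_\bullet]\in\Iso(C^b(\P))}$ by a unitriangular change of basis with respect to a suitable refinement of the dimension-vector ordering (the "leading term" of $K_{\alpha_r,r}\ast\cdots\ast E_{M_r,r}\ast\cdots$ being $[\bigoplus_r(C_{M_r}[r]\oplus K^{?}[r])]$), hence they form a basis.

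\textbf{Main obstacle.} The routine part is the commutation/absorption bookkeeping; the genuine difficulty is justifying the internal structure at a single shift $r$ — namely that the $E_{M,r}$ for varying $M$ at the \emph{same} shift generate exactly $\psi_r(\H_{\tw}(\A))$ and nothing more, and that the PBW basis of $\H_{\tw}(\A)$ transports correctly. This is where Lemma \ref{ola}(1)--(2) and Proposition \ref{main1} must be used in tandem, and where one has to be careful that the twisting by the Euler form of $C^b(\P)$ restricts, on the shift-$r$ block, to (a sign times) the Euler form of $\A$ so that $\psi_r$ really is an algebra embedding on that block; this is precisely the content that lets the argument of \cite[Proposition 4.4(3)]{ChenD} go through verbatim, so I would cite it rather than reprove it, flagging only the shift-window modification.
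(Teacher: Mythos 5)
Your overall route coincides with the paper's, which proves this proposition simply by invoking Lemma \ref{dx1} (every object of $C^b(\P)$ is a finite direct sum of objects $C_M[r]$ and $K_P[r]$) and transplanting the argument of \cite[Proposition 4.4(3)]{ChenD}; your spanning reduction is exactly that. Two corrections on the spanning part, though. First, your claim that higher-shift summands can be \emph{commuted} past lower-shift ones up to a power of $q$ is false for adjacent shifts: $\Ext_{C^b(\P)}^1(C_N[r+1],C_M[r])\cong\Hom_{\A}(N,M)$ need not vanish, and the failure of this commutation is precisely the nontrivial relation (the one involving $\gamma_{MN}^{XY}$) in Proposition \ref{guanxi}. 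What you need, and all you need, is the one-sided vanishing $\Ext^1_{C^b(\P)}(C_M[r],C_N[l])=0$ for $l>r$ (Lemma \ref{ola}), which lets you write $[\bigoplus_i (C_{M_i}[i]\oplus K_{P_i}[i])]$ directly as a nonzero multiple of the ordered product with shifts increasing to the right. Relatedly, your ``main obstacle'' is not an obstacle: the shift-$r$ part of any object is a single $C_{M_r}[r]\oplus K_{P_r}[r]$ with $M_r\in\A$ possibly decomposable, so spanning never requires multiplying two $E$'s at the same shift, and no PBW-type transport of a basis of $\H_{\tw}(\A)$ is needed --- one factor $E_{M_r,r}$ per shift suffices, exactly as in the stated monomials.

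The genuine crux, which your independence argument assumes rather than proves, is the structure of the localization. The displayed monomials live in $\M\H(\A)$, not in $\H_{\tw}(C^b(\P))$ (the $K_{\alpha,r}$ with general $\alpha\in K(\A)$ are not classes of complexes), so a ``unitriangular change of basis with respect to the standard basis $\{[M_\bullet]\}$'' is not literally available, and the filtration/associated-graded sketch presupposes the tensor-decomposition it is meant to establish. What is actually required is that $\H_{\tw}(C^b(\P))$ is free as a module over the subalgebra generated by the $[K_P[r]]$, with basis the classes $[C_\bullet]$ of complexes having no summands $K_P[r]$; this is what guarantees that the localization is faithful and has basis $\{[C_\bullet]\ast K_{\alpha_r,r}\ast\cdots\ast K_{\alpha_l,l}\}$, after which your diagonal rescaling (absorbing $K_{\hat{\Omega}_{M_i},i}$) identifies this basis with the displayed set. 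That freeness step is precisely the content of the argument of \cite[Proposition 4.4]{ChenD} that the paper cites, so your closing decision to cite it rather than reprove it is the right call --- but the independence sketch as written does not substitute for it, and the place where care is needed is there, not at the fixed-shift blocks.
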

Given objects $M,N,X,Y\in\A$, we denote by $W_{MN}^{XY}$ the set $$\{(f,g,h)~|~\xymatrix{0\ar[r]&X\ar[r]^g&M\ar[r]^f&N\ar[r]^h&Y\ar[r]&0} \text{is exact in}~~\A\}$$ and set $$\gamma_{MN}^{XY}:=\frac{|W_{MN}^{XY}|}{a_Ma_N}.$$

Applying the arguments similar to those in the proof of \cite[Theorem 3.2]{ZHC2} together with Lemma \ref{ola}, we obtain
the following
\begin{proposition}\label{guanxi}
The Hall algebra $\M\H(\A)$ is generated by the elements in
$$\{E_{M,r},K_{\alpha,r}~|~M\in\A,\alpha\in K(\A),r\in\mathbb{Z}\}$$
with the defining relations
\begin{flalign}
&E_{M,r}\ast E_{N,r}=\sum\limits_{[L]}q^{\lr{M,N}}{\frac{{|\Ext_\mathcal{A}^1{{(M,N)}_L}|}}{{|\Hom_\mathcal{A}(M,N)|}}}E_{L,r};\\
&E_{M,r+1}\ast E_{N,r}=\sum\limits_{[X],[Y]}q^{-\lr{M,N}}\gamma_{MN}^{XY}\frac{a_Ma_N}{a_Xa_Y}E_{Y,r}\ast E_{X,r+1}\ast K_{\hat{M}-\hat{X},r};\\
&E_{M,r}\ast E_{N,l}=q^{(-1)^{r-l}\lr{M,N}}E_{N,l}\ast E_{M,r},~~~~r-l>1;\\
&K_{\alpha,r}\ast E_{M,l}=E_{M,l}\ast K_{\alpha,r};\\
&K_{\alpha,r}\ast K_{\beta,r}=K_{\alpha+\beta,r},~~K_{\alpha,r}\ast K_{\beta,l}=K_{\beta,l}\ast K_{\alpha,r},
\end{flalign}
where $M,N\in\A$, $\alpha,\beta\in K(\A)$ and $r,l\in\mathbb{Z}$.
\end{proposition}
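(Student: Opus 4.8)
The plan is to deduce this presentation from the basis statement in the previous proposition together with the homological computations in Lemma \ref{ola}, following the strategy of \cite[Theorem 3.2]{ZHC2}. First I would check that the listed relations \emph{hold} in $\M\H(\A)$. The relation $E_{M,r}\ast E_{N,r}=\sum_{[L]}q^{\lr{M,N}}\frac{|\Ext^1_\A(M,N)_L|}{|\Hom_\A(M,N)|}E_{L,r}$ is immediate from Proposition \ref{main1}: $\psi_r$ is an algebra embedding sending $[M]\mapsto E_{M,r}$, so this is just the twisted multiplication of $\H_{\tw}(\A)$ transported along $\psi_r$. The relations involving only the $K_{\alpha,r}$ are (\ref{kejia}) and the commutativity observed just after it (these follow since $K_P[r]$ is projective-injective, so $[K_P[r]]$ is central). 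The commutation of $K_{\alpha,r}$ with $E_{M,l}$ is likewise immediate. The ``far commutation'' relation $E_{M,r}\ast E_{N,l}=q^{(-1)^{r-l}\lr{M,N}}E_{N,l}\ast E_{M,r}$ for $r-l>1$ is where Lemma \ref{ola} enters: by parts (4) and (5) there, $\lr{C_M[r],C_N[l]}=(-1)^{r-l}\lr{M,N}$ and $\lr{C_N[l],C_M[r]}=0$ when $r-l>1$ (since then $r-l>1\ge1$, wait — here I must be careful about which side is in higher degree); moreover for such a degree gap $\Ext^i_{C^b(\P)}(C_M[r],C_N[l])=0$ for all $i\ge1$ and also $\Hom=0$, and symmetrically, so in the untwisted algebra $[C_M[r]]\diamond[C_N[l]]=[C_M[r]\oplus C_N[l]]=[C_N[l]]\diamond[C_M[r]]$, and the twisting factors produce exactly $q^{(-1)^{r-l}\lr{M,N}}$ after absorbing the $K$-elements in the definition of $E_{M,r}$.

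The genuinely substantial relation is the middle one, $E_{M,r+1}\ast E_{N,r}=\sum_{[X],[Y]}q^{-\lr{M,N}}\gamma_{MN}^{XY}\frac{a_Ma_N}{a_Xa_Y}E_{Y,r}\ast E_{X,r+1}\ast K_{\hat M-\hat X,r}$. Here I would expand $E_{M,r+1}\ast E_{N,r}$ using the definition $E_{M,r}=K_{-\hat\Omega_M,r}\ast[C_M[r]]$, pull the central $K$-factors to one side, and then compute the product $[C_M[r+1]]\diamond[C_N[r]]$ in the Hall algebra $\H(C^b(\P))$. The point is to enumerate extensions of $C_M[r]$ by $C_M[r+1]$ (equivalently, subobjects of the middle term), which by Lemma \ref{ola}(3) vanish for $i\ge1$ in the ``wrong'' direction but are governed by $\Ext^1$ and $\Hom$ in the ``right'' direction; the middle terms that occur decompose, up to projective-injective summands $K_P$, as $C_X[r+1]\oplus C_Y[r]$ where $X,Y$ run over the terms of a four-term exact sequence $0\to X\to M\to N\to Y\to 0$ — this is precisely what the octahedral/derived-category bookkeeping in \cite{ZHC2} produces, and it is why the combinatorial coefficient is $\gamma_{MN}^{XY}$. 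Tracking the Grothendieck-group classes of the projective-injective summands that get absorbed yields the factor $K_{\hat M-\hat X,r}$, and collecting the automorphism-group cardinalities (via the Riedtmann--Peng formula (\ref{RPGS}) and $a_{C_X[r+1]\oplus C_Y[r]\oplus K_P}$ versus $a_X a_Y$) gives the coefficient $\frac{a_Ma_N}{a_Xa_Y}$; the twisting exponents collapse to $q^{-\lr{M,N}}$ using Lemma \ref{ola}(4) with $r-l=1$. I expect \emph{this} computation — matching middle terms of extensions in $C^b(\P)$ with four-term exact sequences in $\A$ and correctly accounting for which $K_P$-summands are free and which are forced — to be the main obstacle; but since \cite[Theorem 3.2]{ZHC2} carries out exactly this analysis for cyclic complexes and Lemma \ref{ola} supplies the corresponding $\Ext$-vanishing for bounded complexes, the argument transfers essentially verbatim.

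Finally I would prove that these relations are \emph{defining}, i.e. that the abstract algebra $\mathscr{A}$ presented by these generators and relations surjects onto $\M\H(\A)$ with trivial kernel. Surjectivity is clear since the $E_{M,r}$ and $K_{\alpha,r}$ generate $\M\H(\A)$ (this is asserted in the statement and follows from the preceding proposition's basis). For injectivity, I would use the relations to rewrite an arbitrary product of generators in $\mathscr{A}$ into the normal form $K_{\alpha_r,r}\ast\cdots\ast K_{\alpha_l,l}\ast E_{M_r,r}\ast\cdots\ast E_{M_l,l}$: the $K$--$E$ commutation relations move all $K$'s to the left, the far-commutation relations order the $E$-blocks by degree, the relation for $E_{M,r+1}\ast E_{N,r}$ handles adjacent degrees by pushing the higher-degree factor to the right at the cost of new lower-degree $E$'s and $K$'s (a terminating process since it strictly reduces a suitable complexity measure, e.g. the total length weighted by degree), and within a single degree the first relation together with the associativity of the Hall product of $\A$ lets one collapse a product of same-degree $E$'s into a single one. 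Hence $\mathscr{A}$ is spanned by the images of these normal-form monomials; since the previous proposition says these images form a \emph{basis} of $\M\H(\A)$, the surjection $\mathscr{A}\twoheadrightarrow\M\H(\A)$ sends a spanning set to a linearly independent set and is therefore an isomorphism.
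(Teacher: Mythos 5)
Your proposal is correct and follows essentially the route the paper takes: the paper's ``proof'' is precisely an appeal to the argument of \cite[Theorem 3.2]{ZHC2} combined with Lemma \ref{ola}, i.e.\ verify the relations (the adjacent-degree relation via counting extensions whose middle terms are $C_X[r+1]\oplus C_Y[r]$ up to $K_P$-summands, matched with four-term exact sequences in $\A$) and then show the relations are defining by rewriting to the normal form and comparing with the basis of $\M\H(\A)$ from the preceding proposition. Your sketch reproduces exactly this, so no further comment is needed.
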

\begin{remark}
By Proposition \ref{guanxi} and \cite[Proposition 5.3]{LinP}, we obtain that the Hall algebra $\M\H(\A)$ is isomorphic to the modified Ringel--Hall algebra $\M\H_{\tw}(\A)$ defined in \cite{LinP}. Explicitly, there exists an isomorphism $\varphi:\M\H(\A)\longrightarrow\M\H_{\tw}(\A)$ defined on generators by
$E_{M,r}\mapsto U_{M,-r}$ and $K_{\alpha,r}\mapsto K_{\alpha,-r}$.
\end{remark}
For simplicity, we recall the twisted derived Hall algebra $\mathcal {D}\mathcal {H}(\A)$ of $\A$ in the form of generators and relations in the following
\begin{proposition}{\rm(\cite{Toen2006})}
$\mathcal {D}\mathcal {H}(\A)$ is an associative and unital $\mathbb{C}$-algebra generated by the elements in $\{Z_M^{[r]}~|~M\in\Iso(\A),~r\in \mathbb{Z}\}$ and the following relations
\begin{flalign}
&Z_M^{[r]}\ast Z_N^{[r]}=\sum\limits_{[L]}q^{\lr{M,N}}{\frac{{|\Ext_\mathcal{A}^1{{(M,N)}_L}|}}{{|\Hom_\mathcal{A}(M,N)|}}}Z_L^{[r]};\\
&Z_M^{[r+1]}\ast Z_N^{[r]}=\sum\limits_{[X],[Y]}q^{-\lr{M,N}}\gamma_{MN}^{XY}\frac{a_Ma_N}{a_Xa_Y} Z_Y^{[r]}\ast Z_X^{[r+1]};\\
&Z_M^{[r]}\ast Z_N^{[l]}=q^{(-1)^{r-l}\lr{M,N}} Z_N^{[l]}\ast Z_M^{[r]}, \quad r-l>1.
\end{flalign}
\end{proposition}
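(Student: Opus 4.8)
The plan is to derive this presentation from To\"en's definition of the derived Hall algebra $\mathcal{D}\mathcal{H}(\A)=\mathcal{D}\mathcal{H}(D^b(\A))$ (as reformulated for triangulated categories by Xiao--Xu), using the hereditary hypothesis throughout. Recall that $\mathcal{D}\mathcal{H}(\A)$ is the $\mathbb{C}$-space with basis $\{u_X\mid X\in\Iso(D^b(\A))\}$ whose twisted multiplication has structure constants counting triangles $N\to L\to M\to N[1]$, normalised by automorphism groups and by the higher groups $\Hom_{D^b(\A)}(-,-[i])$ with $i>0$. Since $\A$ is hereditary, every $X\in D^b(\A)$ is isomorphic to $\bigoplus_i H^i(X)[-i]$ with all cohomologies in $\A$ and finitely many nonzero, so $\Iso(D^b(\A))$ is parametrised by finitely supported families of objects of $\A$, and $u_X$ is, up to a nonzero scalar, the ordered product (with strictly increasing shifts) of the generators $Z_M^{[r]}:=u_{M[r]}$. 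Hence the $Z_M^{[r]}$ generate $\mathcal{D}\mathcal{H}(\A)$, and as soon as the three relations are verified we obtain a surjection $\pi$ from the abstract algebra $R$ they present onto $\mathcal{D}\mathcal{H}(\A)$; associativity and unitality then come for free, and it remains only to identify these relations and to show $\pi$ injective.

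For the relations I would read off the triangles. The full subcategory of $D^b(\A)$ supported in a single cohomological degree is equivalent to $\A$, and triangles among its objects are exactly the short exact sequences of $\A$; so $\{Z_M^{[r]}\mid M\in\Iso(\A)\}$ for fixed $r$ spans a copy of $\H_{\tw}(\A)$, the twist being $q^{\lr{M[r],N[r]}}=q^{\lr{M,N}}$, which is the first relation. For adjacent shifts, a triangle $N[r]\to L\to M[r+1]\to N[r+1]$ is classified by a morphism in $\Hom_{D^b(\A)}(M[r+1],N[r+1])\cong\Hom_{\A}(M,N)$, and for $g\colon M\to N$ the rotated triangle $M[r]\xrightarrow{g[r]}N[r]\to L\to M[r+1]$ shows $L\cong\Ker(g)[r+1]\oplus\Coker(g)[r]$; collecting these triangles according to $X:=\Ker g$ and $Y:=\Coker g$ contributes to $Z_Y^{[r]}\ast Z_X^{[r+1]}$ (a scalar multiple of $u_{Y[r]\oplus X[r+1]}$, since $\Ext^i_{\A}=0$ for $i\geq2$ forces the triangles between $X[r+1]$ and $Y[r]$ to split), with coefficient the number $|W_{MN}^{XY}|/(a_Xa_Y)=\gamma_{MN}^{XY}\,a_Ma_N/(a_Xa_Y)$ of $g$ with kernel $X$ and cokernel $Y$, and with the power $q^{-\lr{M,N}}$ supplied by the twist $q^{\lr{M[r+1],N[r]}}$; this is the second relation. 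For $r-l>1$ the only triangles between $M[r]$ and $N[l]$, in either order, are split, so $Z_M^{[r]}\ast Z_N^{[l]}$ and $Z_N^{[l]}\ast Z_M^{[r]}$ are both proportional to $u_{M[r]\oplus N[l]}$; their two normalisations differ exactly through the asymmetric factor $\prod_{i>0}|\Hom_{D^b(\A)}(M[r],N[l+i])|^{(-1)^i}$, which is nontrivial only at $i=r-l$ and $i=r-l+1$ --- where it sees $\Hom_{\A}(M,N)$ and $\Ext^1_{\A}(M,N)$ respectively --- and hence equals $q^{(-1)^{r-l}\lr{M,N}}$, giving the third relation.

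Finally, I would prove that these relations are complete, i.e. that $\pi$ is injective. The second relation rewrites a product of two generators with adjacent shifts, higher one on the left, as a combination of products with shifts in increasing order; the third does the same for shift gaps at least $2$, up to a power of $q$; and the first collapses a product of two generators of equal shift. Each such step strictly decreases the number of descents in the sequence of shift degrees, so the rewriting terminates, and every element of $R$ is a $\mathbb{C}$-combination of ordered monomials $Z_{M_{r_1}}^{[r_1]}\ast\cdots\ast Z_{M_{r_n}}^{[r_n]}$ with $r_1<\cdots<r_n$. These monomials span $R$ and are carried by $\pi$ (up to nonzero scalars) to the basis vectors $u_{M_{r_1}[r_1]\oplus\cdots\oplus M_{r_n}[r_n]}$, which are pairwise distinct by the hereditary decomposition of objects of $D^b(\A)$; hence they are linearly independent, forcing $\pi$ injective. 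The main obstacle is exactly this last step --- controlling the normal-form reduction and confirming that no further relation is needed --- together with the precise normalisation bookkeeping in the second and third relations. A shortcut avoiding both is to combine Proposition~\ref{guanxi} (and the identification $\M\H(\A)\cong\M\H_{\tw}(\A)$) with the known description of $\mathcal{D}\mathcal{H}(\A)$ as the quotient of the modified Ringel--Hall algebra $\M\H(\A)$ by the ideal generated by $\{K_{\alpha,r}-1\}$ (see \cite{LinP}): under the resulting map $E_{M,r}\mapsto Z_M^{[r]}$, $K_{\alpha,r}\mapsto1$, the five relations of Proposition~\ref{guanxi} become precisely the three listed above.
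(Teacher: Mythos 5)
The paper itself offers no proof of this proposition: it is recalled, with the twist conventions of \cite{LinP}, from To\"en's computation of the derived Hall algebra of a hereditary abelian category, and is then fed as an external input into the results that follow. So the only meaningful comparison is with that standard argument, and your main line does reconstruct its outline correctly: decompose objects of $D^b(\A)$ into stalk complexes, read the three relations off triangle counts, and get completeness from a PBW-type basis of ordered monomials.

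Three concrete points, however. First, your termination measure fails: applying the third relation to a descent with gap at least $2$ removes that descent but can create a new one with a neighbouring letter (the shift sequence $(5,3,4)$ becomes $(3,5,4)$, still with one descent), so ``the number of descents strictly decreases'' is false; since the second and third relations preserve the multiset of shifts and the first shortens the word, the number of inversions of the shift sequence (paired lexicographically with the word length) is the measure that works. Second, the substantive content of the second and third relations is asserted rather than verified: one must actually expand To\"en's structure constants --- the automorphism factors, the alternating product of higher Hom-spaces, and the scalar relating $Z_Y^{[r]}\ast Z_X^{[r+1]}$ to $u_{Y[r]\oplus X[r+1]}$, which involves $|\Ext^1_{\A}(Y,X)|$ --- together with the Euler-form twist; note that $\lr{M[r],N[l]}=(-1)^{r-l}\lr{M,N}$ while $\lr{N[l],M[r]}=0$ when $r-l>1$, so the commutation factor cannot be attributed both to the twist and to the untwisted normalisation discrepancy without double counting, and your sketch does not pin down which bookkeeping convention is in force. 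Third, the proposed shortcut is circular: the identification of $\mathcal{D}\mathcal{H}(\A)$ with the quotient of $\M\H(\A)$ by the ideal generated by the $K_{\alpha,r}-1$, i.e.\ \cite[Theorem 5.5, Corollary 5.6]{LinP} as reformulated in this paper, is itself established by checking exactly the presentation you are trying to prove, which is why the paper recalls the proposition before invoking those results rather than deducing it from them.
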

Now we reformulate \cite[Theorem 5.5]{LinP} in the following
\begin{theorem}
There is an embedding of algebras
$$\xymatrix{\Psi:\mathcal {D}\mathcal {H}(\A)\ar@{^{(}->}[r]&\M\H(\A)}$$
defined on generators (with $n>0$) by
\begin{flalign*}
Z_M^{[0]}\mapsto E_{M,0},~~~~Z_M^{[n]}\mapsto E_{M,n}\ast\prod\limits_{i=1}^nK_{(-1)^i\hat{M},n-i}~~~~\text{and}~~~~Z_M^{[-n]}\mapsto E_{M,-n}\ast\prod\limits_{i=1}^nK_{(-1)^i\hat{M},-(n-i+1)}.
\end{flalign*}
\end{theorem}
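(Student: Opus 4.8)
The plan is to verify directly that the map $\Psi$ on generators respects the three families of defining relations of $\mathcal{D}\mathcal{H}(\A)$, and then to establish injectivity by a basis argument. Since $\mathcal{D}\mathcal{H}(\A)$ is presented by generators and relations (the preceding proposition), a well-defined algebra homomorphism is obtained as soon as the images $\Psi(Z_M^{[r]})$ satisfy relations (5.13)–(5.15) with $Z$ replaced by its image. Throughout, the key computational inputs are relations (5.8)–(5.12) of Proposition \ref{guanxi} for the elements $E_{M,r}$ and $K_{\alpha,r}$, together with the additivity $K_{\alpha,r}\ast K_{\beta,r}=K_{\alpha+\beta,r}$ and the centrality of the $K$'s across different gradings. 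A convenient first step is to record, for each $n\in\ZZ$, the abbreviation $\mathcal{K}_{M,n}:=\prod_{i=1}^{n}K_{(-1)^i\hat M,\,n-i}$ (and its analogue for negative index), so that $\Psi(Z_M^{[n]})=E_{M,n}\ast \mathcal{K}_{M,n}$; one then needs the transformation rule for how $\mathcal{K}_{M,n}$ interacts with $E_{N,l}$, which follows from (5.11) since each factor $K_{(-1)^i\hat M,n-i}$ commutes with every $E_{N,l}$.

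The second step is to check the three relation families in turn. For the same-degree relation (5.13): since all $K$'s in $\mathcal{K}_{M,r}$ and $\mathcal{K}_{N,r}$ are central, $\Psi(Z_M^{[r]})\ast\Psi(Z_N^{[r]}) = (E_{M,r}\ast E_{N,r})\ast \mathcal{K}_{M,r}\ast\mathcal{K}_{N,r}$; applying (5.8) and then observing, via (5.12), that $\mathcal{K}_{M,r}\ast\mathcal{K}_{N,r}=\mathcal{K}_{L,r}$ whenever $\hat L=\hat M+\hat N$ (which holds for every $L$ appearing with nonzero coefficient, as $L$ is an extension of $M$ by $N$) gives exactly the image of the right-hand side of (5.13). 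For the far-apart commutation relation (5.15) with $r-l>1$: again all $K$'s are central, so one reduces to (5.10) and to $\mathcal{K}_{M,r}\ast\mathcal{K}_{N,l}=\mathcal{K}_{N,l}\ast\mathcal{K}_{M,r}$, which is (5.12). The genuinely load-bearing case is the adjacent relation (5.14), $Z_M^{[r+1]}\ast Z_N^{[r]}$: here the image of the left-hand side is $E_{M,r+1}\ast\mathcal{K}_{M,r+1}\ast E_{N,r}\ast\mathcal{K}_{N,r}$; moving $\mathcal{K}_{M,r+1}$ past $E_{N,r}$ (centrality) and then applying (5.9) produces a sum over $[X],[Y]$ with the factor $E_{Y,r}\ast E_{X,r+1}\ast K_{\hat M-\hat X,r}$, and the remaining task is a bookkeeping identity: the accumulated $K$-part $K_{\hat M-\hat X,r}\ast\mathcal{K}_{M,r+1}\ast\mathcal{K}_{N,r}$ must rearrange into $\mathcal{K}_{Y,r}\ast\mathcal{K}_{X,r+1}$, using only (5.12), additivity of $K$ in its first slot, and the Grothendieck-group relation $\hat X-\hat M+\hat N-\hat Y=0$ coming from the exact sequence $0\to X\to M\to N\to Y\to 0$ (so $\hat M-\hat X=\hat N-\hat Y$). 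This telescoping of the $K$-factors across the two gradings $r$ and $r+1$ is where the precise exponents $(-1)^i\hat M$ in the definition of $\mathcal{K}$ are forced, and it is the step I expect to be the main obstacle — not deep, but the one requiring care with signs and indices. One must also separately handle the boundary between the $n>0$, $n=0$, and $n<0$ regimes of the definition of $\Psi$, i.e.\ check relation (5.14) in the mixed case $r=0$ (so $Z_M^{[1]}\ast Z_N^{[0]}$, where $\Psi(Z_N^{[0]})=E_{N,0}$ carries no $K$-tail) and the analogous passage at the negative end; these are really the same computation with an empty product on one side.

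Once $\Psi$ is shown to be a well-defined algebra homomorphism, injectivity follows from a basis comparison. The derived Hall algebra $\mathcal{D}\mathcal{H}(\A)$ has a PBW-type basis of ordered monomials $Z_{M_r}^{[r]}\ast\cdots\ast Z_{M_l}^{[l]}$ (with $r\le l$, $M_i\in\Iso(\A)$), a standard consequence of relations (5.13)–(5.15); on the other side, the preceding proposition gives that $\M\H(\A)$ has a basis of elements $K_{\alpha_r,r}\ast\cdots\ast K_{\alpha_l,l}\ast E_{M_r,r}\ast\cdots\ast E_{M_l,l}$. Since each $\mathcal{K}_{M,i}$ is an invertible product of $K$'s lying in strictly lower gradings $<i$, pushing all the $K$-tails in $\Psi$ of a basis monomial to the left (using centrality) expresses the image of the ordered $Z$-monomial as a $K$-multiple of the corresponding ordered $E$-monomial $E_{M_r,r}\ast\cdots\ast E_{M_l,l}$ plus, possibly, terms with strictly smaller $K$-support; the leading $E$-part is nonzero, so $\Psi$ carries the PBW basis of $\mathcal{D}\mathcal{H}(\A)$ to a family of linearly independent elements of $\M\H(\A)$, whence $\Psi$ is injective. (Alternatively, one can invoke the isomorphism $\varphi:\M\H(\A)\xrightarrow{\sim}\M\H_{\tw}(\A)$ recorded after Proposition \ref{guanxi} together with \cite[Theorem 5.5]{LinP}, of which this theorem is a reformulation; but the direct argument above keeps the proof self-contained.)
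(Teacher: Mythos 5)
Your argument is correct, but it follows a genuinely different route from the paper: the paper offers no independent proof of this theorem at all — it is explicitly a reformulation of \cite[Theorem 5.5]{LinP}, transported through the isomorphism $\varphi:\M\H(\A)\to\M\H_{\tw}(\A)$ recorded in the remark after Proposition \ref{guanxi} — whereas you verify everything internally. Your direct check is sound: the same-degree and far-commutation relations reduce immediately to the relations of Proposition \ref{guanxi} together with centrality and additivity of the $K$'s, and the adjacent relation indeed comes down to the telescoping identity $K_{\hat M-\hat X,r}\ast\mathcal{K}_{M,r+1}\ast\mathcal{K}_{N,r}=\mathcal{K}_{Y,r}\ast\mathcal{K}_{X,r+1}$, which holds grading-by-grading using $\hat M-\hat X=\hat N-\hat Y$ from the exact sequence $0\to X\to M\to N\to Y\to 0$; this works uniformly in the positive, zero, and negative regimes (with empty products at the boundaries), as you indicate. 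Your injectivity argument is also fine, with one small inaccuracy that does not affect the conclusion: no ``terms with strictly smaller $K$-support'' actually appear — since all $K$'s are central and multiply additively in each grading, the image of an ordered monomial $Z_{M_r}^{[r]}\ast\cdots\ast Z_{M_l}^{[l]}$ is exactly one basis element of $\M\H(\A)$ of the form $K_{\alpha_\bullet,\bullet}\ast E_{M_r,r}\ast\cdots\ast E_{M_l,l}$ from the basis proposition, and distinct monomials give distinct $E$-parts, hence linear independence is immediate. The trade-off: the paper's citation route is shorter and leans on Lin--Peng's modified Ringel--Hall algebra machinery, while your computation is self-contained within Section 5 of the paper (Proposition \ref{guanxi} plus the basis proposition) and makes visible exactly why the alternating exponents $(-1)^i\hat M$ in the $K$-tails are forced; you also correctly note the citation alternative, so nothing essential is missing.
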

Let $\mathbb{T}^\infty(\A)$ be the subalgebra of $\M\H(\A)$ generated by elements $K_{\alpha,r}$ with $\alpha\in K(\A)$ and $r\in\mathbb{Z}$.
By \cite[Corollary 5.6]{LinP},
there is an isomorphism of algebras
\begin{equation}
\hat{\Psi}:\mathcal {D}\mathcal {H}(\A)\otimes_{\mathbb{C}}\mathbb{T}^\infty(\A)\longrightarrow\M\H(\A), x\otimes t\mapsto
\Psi(x)\ast t.
\end{equation}
By \cite[Corollary 5.7]{LinP}, we know that $\M\H(\A)$ is invariant under derived equivalences. The inverse of $\hat{\Psi}$
is the homomorphism \begin{equation}\label{tg}
\hat{\Psi}^{-1}:\M\H(\A)\longrightarrow\mathcal {D}\mathcal {H}(\A)\otimes_{\mathbb{C}}\mathbb{T}^\infty(\A)\end{equation} defined on generators (with $n>0$) by
$$K_{\alpha,n}\mapsto K_{\alpha,n},\quad E_{M,0}\mapsto Z_M^{[0]},$$
$$E_{M,n}\mapsto Z_M^{[n]}\ast\prod\limits_{i=0}^{n-1}K_{(-1)^{n-i-1}\hat{M},i},~~\text{and}~~E_{M,-n}\mapsto Z_M^{[-n]}\ast\prod\limits_{i=1}^{n}K_{(-1)^{n-i}\hat{M},-i},$$
where we have written elements $x\otimes t\in\mathcal {D}\mathcal {H}(\A)\otimes_{\mathbb{C}}\mathbb{T}^\infty(\A)$ as $x\ast t$.

In fact, these results above are essentially the same as those given by Gorsky in \cite[Theorem 4.2]{Gor2}. However, the explicit map between the Hall algebra of $C^b(\P)$ and the derived Hall algebra is not given there, besides, the twist therein used only in the Hall algebra of $C^b(\P)$, not in the derived Hall algebra, not only involves $C^b(\P)$, but also $K^b(\P)$.

\section{Hall algebras of $m$-term complexes}
Since the global dimension of $C^m(\P)$ is $m-1$,  the Euler form of $C^m(\P)$
\begin{equation*}
\lr{\cdot ,\cdot }: K(C^m(\P))\times K(C^m(\P))\longrightarrow \mathbb{Z}
\end{equation*}
determined by \begin{equation}\lr{M_\bullet,N_\bullet}=\sum\limits_{i=0}^{m-1}(-1)^i\dim_k\Ext^i_{C^m(\P)}(M_\bullet,N_\bullet)\end{equation}
is well defined.

Define the Hall algebra $\H_{\tw}(C^m(\P))$ to be the same vector space as $\H(C^m(\P))$, but with the twisted multiplication
\begin{equation}[M_\bullet]\ast[N_\bullet]=q^{\lr{M_\bullet,N_\bullet}}\cdot[M_\bullet]\diamond[N_\bullet].\end{equation}
For any projective-injective object $J_\bullet\in C^m(\P)$, we easily obtain that \begin{equation}[J_\bullet]\ast[M_\bullet]=[M_\bullet\oplus J_\bullet]=[M_\bullet]\ast[J_\bullet]\end{equation} for all $M_\bullet\in C^m(\P)$. Define the Hall algebra $\M\H_m(\A)$ to be the localization of $\H_{\tw}(C^m(\P))$ with respect to elements $[J_\bullet]$ corresponding to projective-injective objects $J_\bullet$ in $C^m(\P)$.
It is easy to see that $\M\H_1(\A)$ is isomorphic to the group algebra $\mathbb{C}[K(\A)]$ of the Grothendieck group $K(\A)$.

From now on, we let $m\geq2$. As before, for each $0\leq r<m-1$ and $\alpha\in K(\A)$, by writing $\alpha=\hat{P}-\hat{Q}$ for some $P,Q\in\P$,
we define $$J_{\alpha,r}=[J_P[r]]\ast[J_Q[r]]^{-1}.$$
For any $\alpha,\beta\in K(\A)$, it is easy to see that
\begin{equation}\label{kejia}
J_{\alpha,r}\ast J_{\beta,r}=J_{\alpha+\beta,r}.\end{equation}
Moreover, for any $M_\bullet\in C^m(\P)$ we have that
\begin{equation}
J_{\alpha,r}\ast[M_\bullet]=[M_\bullet]\ast J_{\alpha,r}.
\end{equation}

Given an object $M\in\A$, for each $0\leq r<m-1$, we define
\begin{equation}
\mathbb{X}_{M,r}:=J_{-\hat{\Omega}_M,r}\ast[T_M[r]]\in\M\H_m(\A).
\end{equation}
For each object $P\in\P$, we define
\begin{equation}\mathbb{X}_{P,m-1}:=[S_P]\in\M\H_m(\A).\end{equation}

First of all, we give a basis in $\M\H_m(\A)$ as follows:
\begin{proposition}\label{mji}
$\M\H_m(\A)$ has a basis consisting of elements
$$J_{\alpha_0,0}\ast J_{\alpha_{1},1}\ast\cdots\ast J_{\alpha_{m-2},m-2}\ast \mathbb{X}_{M_0,0}\ast \mathbb{X}_{M_{1},1}\ast\cdots\ast \mathbb{X}_{M_{m-2},m-2}\ast\mathbb{X}_{P,m-1},$$
where $\alpha_r\in K(\A)$ and $M_r\in\A$ for $0\leq r<m-1$, and $P\in\P$.
\end{proposition}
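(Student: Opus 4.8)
The plan is to reduce the claimed basis of $\M\H_m(\A)$ to the already-established basis of $\M\H(\A)$ from the previous section, exploiting the surjection $\chi$ of Section 3 and the structure theory of indecomposables in $C^m(\P)$ from Lemma \ref{indobj}. First I would note that, since $C^m(\P)$ is an exact category in which every object decomposes (up to isomorphism) as a direct sum of the indecomposables listed in Lemma \ref{indobj} — namely $S_P$, $T_M[r]$, $J_P[r]$ with $0\le r<m-1$ — the Hall algebra $\H(C^m(\P))$ has a PBW-type basis indexed by isomorphism classes, and after localizing at the $[J_\bullet]$'s the classes $[J_P[r]]$ become invertible. Using relation \eqref{kejia} and the fact that $J_{\alpha,r}$ commutes with everything, any product of the generators can be brought into the normal-ordered form displayed in the statement; so the spanning claim is the routine half. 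The real content is linear independence.

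For linear independence I would argue as follows. Every object $M_\bullet\in C^m(\P)$ decomposes as $S_{P_{m-1}}\oplus\big(\bigoplus_{r=0}^{m-2}(T_{M_r}[r]\oplus J_{R_r}[r])\big)$ for uniquely determined $P_{m-1}\in\P$, $M_r\in\A$, $R_r\in\P$ (uniqueness by Krull–Schmidt, using that $T_M[r]$, $S_P$, $J_P[r]$ are pairwise non-isomorphic indecomposables with $T_M$ determined by $M$ via Lemma \ref{jx}). Correspondingly, after localization, the monomial
$$J_{\alpha_0,0}\ast\cdots\ast J_{\alpha_{m-2},m-2}\ast \mathbb{X}_{M_0,0}\ast\cdots\ast \mathbb{X}_{M_{m-2},m-2}\ast\mathbb{X}_{P,m-1}$$
equals $q^{N}\cdot[S_P\oplus\bigoplus_r(T_{M_r}[r]\oplus J_{S_r}[r])]$ for an explicit power $N$ of $q$ and explicit projectives $S_r$ determined by the $\alpha_r$ and $\hat\Omega_{M_r}$ (unravelling the definition $\mathbb{X}_{M,r}=J_{-\hat\Omega_M,r}\ast[T_M[r]]$ and $J_{\alpha,r}=[J_P[r]]\ast[J_Q[r]]^{-1}$, together with the fact that $[J_\bullet]$ acts by direct sum). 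Since the data $(\{\alpha_r\},\{M_r\},P)$ biject with the data $(\{S_r\},\{M_r\},P)$, and the latter index pairwise distinct basis elements of the localized Hall algebra, these monomials are linearly independent.

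Concretely I would phrase the last step by transporting the known basis of $\M\H(\A)$: Proposition stating the basis of $\M\H(\A)$ gives that the $K_{\alpha_i,i}\ast E_{M_i,i}$-monomials are a basis there; applying the algebra map $\chi$ (Theorem in Section 3, now in its twisted/localized incarnation, sending $C_m(\P)$-classes with $d_0=0$ to $C^m(\P)$-classes and killing $d_0\ne 0$) sends $C_M[r]\mapsto T_M[r]$ for $0\le r<m-1$, $C_P[m-1]=K_{\ldots}$-type objects to the $S_P$/$J_P$ pieces, and $K_P[r]\mapsto J_P[r]$; one checks $\chi$ carries the normal-ordered $\M\H(\A)$-basis onto exactly the claimed family, and that the kernel meets the span of these in zero — equivalently, that $\chi$ restricted to the span of the relevant $C_m(\P)$-monomials is injective. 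Actually it is cleaner to bypass $\chi$ and argue directly via Krull–Schmidt as in the second paragraph, since the image of $\chi$ involves a change of grading ($\mathbb{Z}_m$ versus $\{0,\dots,m-1\}$) that needs care.

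The main obstacle I anticipate is bookkeeping the localization correctly: one must verify that localizing $\H_{\tw}(C^m(\P))$ at all $[J_\bullet]$ (equivalently at the finitely many indecomposable $[J_P[r]]$ for $P$ running over indecomposables and $0\le r<m-1$) does not collapse anything, i.e. that the $[J_P[r]]$ are non-zero-divisors and that $\{J_{\alpha,r}\}$ is a "torus" on which $K(\A)$ acts freely in each degree $r$ (this uses $C^m(\P)$ having enough projective-injectives with $K_0$-classes spanning, as recorded after Lemma \ref{indobj}). Granting that — which is parallel to the $\M\H(\A)$ computation already cited — the rest is the Krull–Schmidt normalization above, which is routine.
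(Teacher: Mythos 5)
Your overall strategy---identify each normal-ordered monomial with a power of $q$ times the class of a single object of $C^m(\P)$, then use Krull--Schmidt and Lemma \ref{indobj} to match these monomials bijectively with a basis of the localized algebra---is the same as the paper's. But the central equality you assert, namely that $J_{\alpha_0,0}\ast\cdots\ast J_{\alpha_{m-2},m-2}\ast\mathbb{X}_{M_0,0}\ast\cdots\ast\mathbb{X}_{M_{m-2},m-2}\ast\mathbb{X}_{P,m-1}$ equals $q^{N}$ times (invertible $J$-factors and) the class of $S_P\oplus\bigoplus_r T_{M_r}[r]\oplus\cdots$, is precisely the point that requires a homological input you never supply. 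Unravelling the definitions and the fact that projective-injective classes multiply by direct sum only dispose of the $J$-part; the product $[T_{M_0}]\ast[T_{M_1}[1]]\ast\cdots\ast[T_{M_{m-2}}[m-2]]\ast[S_P]$ collapses to a single class only because $\Ext^1_{C^m(\P)}(T_M[r],T_N[l])=0$ for $0\le r<l<m-1$ (proved in the paper by the same computation as (\ref{ext}), via $\Hom_{K^b(\P)}\cong\Hom_{D^b(\A)}$) and because $S_P$ is injective, so the rightmost factor contributes no extensions either. This vanishing is order-sensitive: in the opposite order the product is a genuine sum over many isomorphism classes (this is the content of the second and third relations in Proposition \ref{main3}), so ``an explicit power $N$ of $q$'' cannot be dismissed as bookkeeping---it is the actual content of the proof, and it is missing from your argument.

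Two smaller points. Your paragraph-one spanning argument (``any product of the generators can be brought into the normal-ordered form'') would, taken literally, need the straightening relations of Proposition \ref{main3}, which are established later and by means of this very basis; spanning should instead be argued as in your second paragraph and in the paper: every $[M_\bullet]$ decomposes by Lemma \ref{indobj}, and the same Ext-vanishing identifies its class, up to a $q$-power and invertible $J$-factors, with a normal-ordered monomial. The detour through $\chi$ is rightly abandoned: $\chi$ relates the untwisted Hall algebras of $C_m(\P)$ and $C^m(\P)$ and is not compatible with the twist and localization used here, and the basis one would want to transport is that of $\M\H(\A)$, built from $C^b(\P)$ rather than $C_m(\P)$ (moreover the embedding $\lambda$ into $\M\H(\A)$ is itself proved injective using Proposition \ref{mji}, so that route risks circularity). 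Your worry about the localization is harmless: each $[J_\bullet]$ acts on the basis of $\H_{\tw}(C^m(\P))$ injectively by $[M_\bullet]\mapsto[M_\bullet\oplus J_\bullet]$, hence is a central non-zero-divisor, and the localized algebra retains the expected basis exactly as in the cited case of $\M\H(\A)$.
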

\begin{proof}
It is similar to (\ref{ext}) that for any objects $M,N\in\A$,
$$\Ext_{C^m(\P)}^1(T_M[r],T_N[l])=0,~~0\leq r<l<m-1.$$
Thus, for $M_0,M_1,\cdots,M_{m-2}\in\A$ and $P\in\P$, noting that $S_P$ is injective in $C^m(\P)$, we have that
$$[T_{M_0}]\ast [T_{M_1}[1]]\ast\cdots\ast [T_{M_{m-2}}[m-2]]\ast [S_P]=q^{a}[T_{M_0}\oplus T_{M_1}[1]\oplus\cdots\oplus T_{M_{m-2}}[m-2]\oplus S_P]$$ for some $a\in\mathbb{Z}$.
By Lemma \ref{indobj}, we can easily complete the proof.
\end{proof}

Let us consider the shift functor $[m]:C^m(\P)\longrightarrow C^b(\P)$. Clearly, $[m]$ is a fully faithful exact functor.
Moreover, for any objects $M_\bullet,N_\bullet\in C^m(\P)$, we have that $\Ext_{C^m(\P)}^i(M_\bullet,N_\bullet)\cong\Ext_{C^b(\P)}^i(M_\bullet[m],N_\bullet[m])$ for all $i\geq1$. That is, $[m]$ is an extremely faithful exact functor. By functorial properties of Hall algebras (cf. \cite{Sc}), there exists an embedding of algebras $\lambda': \H_{\tw}(C^m(\P))\longrightarrow\H_{\tw}(C^b(\P))$.
Thus, we have the following
\begin{proposition}\label{erqian}
There exists an embedding of algebras $\xymatrix{\lambda: \M\H_m(\A)\ar@{^{(}->}[r]&\M\H(\A).}$
\end{proposition}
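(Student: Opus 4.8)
The plan is to upgrade the embedding $\lambda':\H_{\tw}(C^m(\P))\hookrightarrow\H_{\tw}(C^b(\P))$ coming from the shift functor $[m]:C^m(\P)\to C^b(\P)$ to the localizations. First I would record what $[m]$ does on the distinguished objects: by the definitions of $S_P$, $T_M$, $J_P[r]$ and of $C_M$, $K_P[r]$, the functor $[m]$ sends $T_M[r]$ to $C_M[r+?]$ and $J_P[r]$ to $K_P[r+?]$ for the appropriate shifts (up to sign conventions $[m]$ just reindexes, and $J_{\id_P}=T_{\id_P}$ maps to $K_P$ up to shift). The key point is that $[m]$ carries the projective-injective objects $J_\bullet$ of $C^m(\P)$ (which by Lemma~\ref{indobj} are exactly the $J_P[r]$) into the projective-injective objects $K_P[r]$ of $C^b(\P)$. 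Since $\lambda'$ is an algebra embedding sending the localizing set $\{[J_\bullet]\}$ into the localizing set $\{[K_P[r]]\}$, the universal property of Ore localization produces an induced algebra homomorphism $\lambda:\M\H_m(\A)\to\M\H(\A)$; one must check the Ore conditions are met, but these hold automatically here because the $[J_\bullet]$ (resp. $[K_P[r]]$) are central up to the twist — indeed equations $[J_\bullet]\ast[M_\bullet]=[M_\bullet\oplus J_\bullet]=[M_\bullet]\ast[J_\bullet]$ and the analogous identity for $[K_P[r]]$ show both localizations are at central elements, so $\lambda$ simply sends $[J_\bullet]^{-1}\mapsto\lambda'([J_\bullet])^{-1}$.

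Next I would prove injectivity of $\lambda$. For this I would use the explicit basis of $\M\H_m(\A)$ from Proposition~\ref{mji} and the explicit basis of $\M\H(\A)$ from the proposition preceding Proposition~\ref{guanxi} (monomials in $K_{\alpha_i,i}$ and $E_{M_i,i}$). Under $[m]$ and hence under $\lambda$, the generator $J_{\alpha,r}$ maps to $K_{\alpha,r+c}$ for a fixed shift $c$ depending on $m$, the generator $\X_{M,r}=J_{-\hat\Omega_M,r}\ast[T_M[r]]$ maps to $K_{-\hat\Omega_M,r+c}\ast[C_M[r+c]]=E_{M,r+c}$, and $\X_{P,m-1}=[S_P]$ maps to some element expressible in the $C^b(\P)$-generators (note $S_P$ as a one-term complex in degree $1$ goes, after shifting by $m$, to a stalk complex, i.e. essentially $C_{P}[\,\cdot\,]$ with $\Omega_P=0$, so up to a $K$-factor it is $E_{P,\,\cdot\,}$). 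Thus $\lambda$ carries the distinguished basis of Proposition~\ref{mji} to a family of monomials in the $E$'s and $K$'s that is part of — and in particular linearly independent within — the PBW-type basis of $\M\H(\A)$; hence $\lambda$ is injective. One small bookkeeping item: I must confirm the twisting forms match, i.e. that $\lr{M_\bullet,N_\bullet}$ computed in $C^m(\P)$ agrees with $\lr{M_\bullet[m],N_\bullet[m]}$ in $C^b(\P)$, which is exactly the content of the displayed $\Ext$-isomorphism $\Ext^i_{C^m(\P)}(M_\bullet,N_\bullet)\cong\Ext^i_{C^b(\P)}(M_\bullet[m],N_\bullet[m])$ for $i\geq1$ noted just before the statement, together with the obvious identity for $i=0$; so $\lambda'$ really is twist-compatible and $\lambda$ is an algebra map for the twisted products.

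The main obstacle, I expect, is the bookkeeping of shifts and signs: $C^m(\P)$ sits inside $C^b(\P)$ in degrees $1,\dots,m$, while $C_m(\P)$ and the objects $C_M$, $K_P$ live with the convention that the differential $d_0$ is the "wrap-around" one, so one has to be careful matching $T_M[r]\mapsto C_M[\,?\,]$, $J_P[r]\mapsto K_P[\,?\,]$, and in particular checking that the image of $\X_{P,m-1}=[S_P]$ lands among the allowed $E$-generators rather than outside the basis. Once the dictionary is pinned down precisely (which is routine from the definitions in Section~2 and Section~4), the embedding statement follows from the universal property of localization plus the comparison of the two explicit bases, with no further analysis needed.
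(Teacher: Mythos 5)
Your proposal is correct and follows essentially the same route as the paper: the embedding $\lambda'$ induced by the fully faithful exact functor $[m]$ is passed to the localizations via the universal property (the localizing elements being central), and injectivity is obtained by sending the basis of Proposition~\ref{mji} to a linearly independent subfamily of the PBW-type basis of $\M\H(\A)$. The only bookkeeping you left open is harmless: with the paper's conventions the shift offset is zero, i.e. $\lambda(J_{\alpha,r})=K_{\alpha,r}$, $\lambda(\mathbb{X}_{M,r})=E_{M,r}$ and $\lambda(\mathbb{X}_{P,m-1})=E_{P,m-1}$ (the last since $\Omega_P=0$).
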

\begin{proof}
Let $\pi_m:\H_{\tw}(C^m(\P))\longrightarrow\M\H_m(\A)$ and $\pi:\H_{\tw}(C^b(\P))\longrightarrow\M\H(\A)$ be the natural homomorphisms of algebras. Since $\pi\circ\lambda'$ maps all elements $[J_P[r]]$ in $\H_{\tw}(C^m(\P))$ to the invertible elements $[K_P[r]]$ in $\M\H(\A)$, by universal properties of localizations, we obtain that there is a unique homomorphism of algebras
$$\lambda:\M\H_m(\A)\longrightarrow\M\H(\A)$$ such that $\pi\circ\lambda'=\lambda\circ\pi_m$. Explicitly,
$\lambda(J_{\alpha,r})=K_{\alpha,r}$, $\lambda(\mathbb{X}_{M,r})=E_{M,r}$ and $\lambda(\mathbb{X}_{P,m-1})=E_{P,m-1}$
for all $\alpha\in K(\A)$, $M\in\A$, $0\leq r<m-1$ and $P\in\P$. Thus, the injectivity of $\lambda$ follows from the fact that $\lambda$ sends the basis of $\M\H_m(\A)$ in Proposition \ref{mji} to a linearly independent set in $\M\H(\A)$.
\end{proof}
Combining Proposition \ref{erqian} with Proposition \ref{main1}, we obtain the following
\begin{proposition}\label{main2}
For each $0\leq r<m-1$, there exists an embedding of algebras
$$\xymatrix{\varphi_r:\H_{\tw}(\A)\ar@{^{(}->}[r]&\M\H_m(\A),} \xymatrix{[M]\ar@{|->}[r]&\mathbb{X}_{M,r}.}$$
\end{proposition}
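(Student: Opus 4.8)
The plan is to realize $\varphi_r$ as a composition of maps already constructed in the excerpt, so that essentially no new computation is required. Concretely, I would set $\varphi_r:=\lambda^{-1}\circ\psi_r$... but $\lambda$ is only an embedding, not an isomorphism, so this needs a little care. The cleaner route is to argue directly: by Proposition~\ref{main1}, $\psi_r:\H_{\tw}(\A)\hookrightarrow\M\H(\A)$ sends $[M]\mapsto E_{M,r}$, and by Proposition~\ref{erqian}, $\lambda:\M\H_m(\A)\hookrightarrow\M\H(\A)$ is an embedding whose image contains every $E_{M,r}=\lambda(\mathbb{X}_{M,r})$ for $0\le r<m-1$, together with all $K_{\alpha,r}$ for $0\le r\le m-2$. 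Thus the image of $\psi_r$ (for $0\le r<m-1$), being generated by the single element $E_{M,r}$ as $M$ ranges over $\A$, lies inside $\im\lambda$. Since $\lambda$ is injective, $\lambda^{-1}$ is a well-defined algebra isomorphism from $\im\lambda$ onto $\M\H_m(\A)$, and I would define $\varphi_r:=\lambda^{-1}\circ\psi_r$.

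The verification then breaks into three short steps. First, confirm that $\im\psi_r\subseteq\im\lambda$: this is immediate because $\H_{\tw}(\A)$ is spanned by the $[M]$, $\psi_r([M])=E_{M,r}=\lambda(\mathbb{X}_{M,r})$, and $\psi_r$ is an algebra homomorphism while $\lambda$ is an algebra embedding, so products of the $E_{M,r}$ also lie in $\im\lambda$. Second, since $\lambda$ is an injective algebra homomorphism, it restricts to an algebra isomorphism $\M\H_m(\A)\xrightarrow{\ \cong\ }\im\lambda$, and hence $\varphi_r:=\lambda^{-1}\circ\psi_r:\H_{\tw}(\A)\to\M\H_m(\A)$ is a well-defined algebra homomorphism. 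Third, injectivity of $\varphi_r$ follows from injectivity of $\psi_r$ (Proposition~\ref{main1}) and of $\lambda^{-1}$. Finally, one reads off the formula on generators: $\varphi_r([M])=\lambda^{-1}(E_{M,r})=\mathbb{X}_{M,r}$, using the explicit description $\lambda(\mathbb{X}_{M,r})=E_{M,r}$ from the proof of Proposition~\ref{erqian}.

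The only genuinely delicate point is making sure the indices line up: $\psi_r$ is defined for all $r\in\ZZ$ (Proposition~\ref{main1}), whereas $\mathbb{X}_{M,r}$ is only defined for $0\le r<m-1$ (the boundary case $r=m-1$ uses $\mathbb{X}_{P,m-1}=[S_P]$ and behaves differently, since $T_M[m-1]$ is not the right object there). So the statement is correctly restricted to $0\le r<m-1$, and within that range the description of $\lambda$ on generators in Proposition~\ref{erqian} gives exactly $\lambda(\mathbb{X}_{M,r})=E_{M,r}=\psi_r([M])$, which is all that is needed. No relation-checking is required beyond what is already packaged into Propositions~\ref{main1} and~\ref{erqian}, so I expect the proof to be essentially a two-line composition argument; the main (minor) obstacle is simply to phrase the ``restrict $\lambda$ to an isomorphism onto its image'' step cleanly enough that $\lambda^{-1}$ makes sense.
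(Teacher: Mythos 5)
Your proposal is correct and is essentially the paper's own proof, which defines $\varphi_r=\lambda^{-1}\circ\psi_r$ using Propositions \ref{main1} and \ref{erqian}; you merely spell out the routine point that $\im\psi_r\subseteq\im\lambda$ (since $\psi_r([M])=E_{M,r}=\lambda(\mathbb{X}_{M,r})$ and the $[M]$ span $\H_{\tw}(\A)$), so that $\lambda^{-1}$ is applied only on $\im\lambda$. The restriction $0\le r<m-1$ is handled exactly as in the paper, so no further argument is needed.
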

\begin{proof}
Taking $\varphi_r=\lambda^{-1}\circ\psi_r$ gives the desired homomorphism.
\end{proof}
Combining Propositions \ref{guanxi}, \ref{mji} and \ref{erqian}, we have the following
\begin{proposition}\label{main3}
The Hall algebra $\M\H_m(\A)$ is generated by the elements in
$$\{J_{\alpha,r},\mathbb{X}_{M,r},\mathbb{X}_{P,m-1}~|~\alpha\in K(\A),M\in\A,P\in\P,0\leq r<m-1\}$$
with the defining relations
\begin{flalign}
&\mathbb{X}_{M,r}\ast \mathbb{X}_{N,r}=\sum\limits_{[L]}q^{\lr{M,N}}{\frac{{|\Ext_\mathcal{A}^1{{(M,N)}_L}|}}{{|\Hom_\mathcal{A}(M,N)|}}}\mathbb{X}_{L,r},
\mathbb{X}_{P,m-1}\ast\mathbb{X}_{Q,m-1}=\mathbb{X}_{P\oplus Q,m-1};\\
&\mathbb{X}_{M,r+1}\ast \mathbb{X}_{N,r}=\sum\limits_{[X],[Y]}q^{-\lr{M,N}}\gamma_{MN}^{XY}\frac{a_Ma_N}{a_Xa_Y}\mathbb{X}_{Y,r}\ast \mathbb{X}_{X,r+1}\ast J_{\hat{M}-\hat{X},r},0\leq r<m-2;\\
&\mathbb{X}_{P,m-1}\ast \mathbb{X}_{M,m-2}=\sum\limits_{[R],[B]}q^{-\lr{P,M}}\gamma_{PM}^{RB}\frac{a_Pa_M}{a_Ra_B}\mathbb{X}_{B,m-2}\ast\mathbb{X}_{R,m-1}\ast J_{\hat{P}-\hat{R},m-2};\\
&\mathbb{X}_{M,r}\ast \mathbb{X}_{N,l}=q^{(-1)^{r-l}\lr{M,N}}\mathbb{X}_{N,l}\ast \mathbb{X}_{M,r},~~~~r-l>1;\\
&\mathbb{X}_{P,m-1}\ast \mathbb{X}_{M,r}=q^{(-1)^{m-r-1}\lr{P,M}}\mathbb{X}_{M,r}\ast \mathbb{X}_{P,m-1},~~~~0\leq r<m-2;\\
&J_{\alpha,r}\ast \mathbb{X}_{M,l}=\mathbb{X}_{M,l}\ast J_{\alpha,r},~~~~J_{\alpha,r}\ast \mathbb{X}_{P,m-1}=\mathbb{X}_{P,m-1}\ast J_{\alpha,r};\\
&J_{\alpha,r}\ast J_{\beta,r}=J_{\alpha+\beta,r},~~J_{\alpha,r}\ast J_{\beta,l}=J_{\beta,l}\ast J_{\alpha,r},
\end{flalign}
where $M,N\in\A$, $P,Q\in\P$ and $\alpha,\beta\in K(\A)$.
\end{proposition}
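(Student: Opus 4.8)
The plan is to leverage the embedding $\lambda:\M\H_m(\A)\hookrightarrow\M\H(\A)$ of Proposition \ref{erqian} together with the presentation of $\M\H(\A)$ from Proposition \ref{guanxi}, and then transport the relations back along $\lambda$. Since $\lambda$ sends $J_{\alpha,r}\mapsto K_{\alpha,r}$, $\mathbb{X}_{M,r}\mapsto E_{M,r}$ for $0\leq r<m-1$, and $\mathbb{X}_{P,m-1}\mapsto E_{P,m-1}$, and since by Proposition \ref{mji} the proposed generating set maps to a set of elements whose products $J_{\alpha_0,0}\ast\cdots\ast\mathbb{X}_{P,m-1}$ span a subspace of $\M\H(\A)$ with a known basis, the strategy is: (i) verify each of the displayed relations holds in $\M\H_m(\A)$ by checking it holds in $\M\H(\A)$ after applying $\lambda$ (using injectivity of $\lambda$); (ii) show these relations suffice to rewrite any product of generators into the normal form of Proposition \ref{mji}, so that no further relations are needed.

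For step (i), most relations are inherited directly from Proposition \ref{guanxi}: the first relation in each of the listed families, along with the $r-l>1$ commutation and the $J$-commutation relations, are literally the images under $\lambda$ of relations (5.x) in $\M\H(\A)$ restricted to indices $0\leq r,l<m-1$, noting $\lambda$ is an algebra map and all the structure constants ($q^{\lr{M,N}}$, $|\Ext^1(M,N)_L|$, $\gamma_{MN}^{XY}$, etc.) are computed in $\A$, independent of $m$. The genuinely new relations are those involving $\mathbb{X}_{P,m-1}=[S_P]$: the product $\mathbb{X}_{P,m-1}\ast\mathbb{X}_{Q,m-1}=\mathbb{X}_{P\oplus Q,m-1}$ because $S_P$ is injective in $C^m(\P)$ and $\Hom/\Ext$ between the $S$'s is controlled (no nonsplit extensions among injectives of the same shift, and the Euler-form twist is trivial since $\lr{S_P,S_Q}$ can be computed to be $0$ — or one simply notes $\lambda(\mathbb{X}_{P,m-1})=E_{P,m-1}$ and uses that $E_{P,m-1}\ast E_{Q,m-1}=E_{P\oplus Q,m-1}$ for projective $P,Q$ in the first relation of Proposition \ref{guanxi}). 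The relation $\mathbb{X}_{P,m-1}\ast\mathbb{X}_{M,m-2}=\sum q^{-\lr{P,M}}\gamma_{PM}^{RB}\frac{a_Pa_M}{a_Ra_B}\mathbb{X}_{B,m-2}\ast\mathbb{X}_{R,m-1}\ast J_{\hat P-\hat R,m-2}$ is the image of relation (5.8) of Proposition \ref{guanxi} at indices $r+1=m-1$, $r=m-2$, after substituting $\lambda$; and $\mathbb{X}_{P,m-1}\ast\mathbb{X}_{M,r}=q^{(-1)^{m-r-1}\lr{P,M}}\mathbb{X}_{M,r}\ast\mathbb{X}_{P,m-1}$ for $0\leq r<m-2$ is the image of relation (5.9) with $r'=m-1$, $l'=r$, $r'-l'=m-1-r>1$.

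For step (ii), I would argue that the listed relations constitute a complete set by a standard counting/normal-form argument. The relations in families with the $J$'s being central and multiplicative let us collect all $J_{\alpha,r}$ to the left; the relations expressing $\mathbb{X}_{M,r+1}\ast\mathbb{X}_{N,r}$ and $\mathbb{X}_{P,m-1}\ast\mathbb{X}_{M,m-2}$ and the commutation relations for $r-l>1$ let us sort the $\mathbb{X}$'s into the order of increasing shift $0,1,\dots,m-1$; and the first-family relations let us reduce products of $\mathbb{X}$'s of equal shift. This shows the abstract algebra defined by these generators and relations surjects onto $\M\H_m(\A)$ with the normal-form monomials spanning; comparing with the basis of Proposition \ref{mji} (whose cardinality matches the normal forms) gives that the surjection is an isomorphism, hence the relations are defining. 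The main obstacle I anticipate is the bookkeeping in step (ii): ensuring that the rewriting procedure terminates and that the span of normal-form monomials is not strictly larger than the basis of Proposition \ref{mji} — in particular one must check the $\mathbb{X}_{P,m-1}$ interactions do not produce monomials outside the claimed form. This is handled by noting $\lambda$ is injective and the relations are exactly the pullbacks of the defining relations of $\M\H(\A)$ (Proposition \ref{guanxi}) restricted to the relevant index range, so the rewriting in $\M\H_m(\A)$ mirrors the one in $\M\H(\A)$ faithfully.
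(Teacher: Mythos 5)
Your proposal is correct and takes essentially the same route as the paper, whose proof of Proposition \ref{main3} is precisely the combination you describe: the relations are pulled back along the embedding $\lambda$ of Proposition \ref{erqian} from the presentation of $\M\H(\A)$ in Proposition \ref{guanxi}, and their completeness follows by rewriting any word in the generators into normal form and comparing with the basis of Proposition \ref{mji}. One minor slip: $\lr{S_P,S_Q}=\dim_k\Hom_{\mathcal{A}}(P,Q)$ rather than $0$ (the twist cancels against the factor $|\Hom_{\mathcal{A}}(P,Q)|^{-1}$ occurring in the untwisted Hall product), but this is harmless since your alternative justification $E_{P,m-1}\ast E_{Q,m-1}=E_{P\oplus Q,m-1}$, obtained from the first relation of Proposition \ref{guanxi} with $P,Q$ projective, is valid.
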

Let $\mathbb{T}^{m-1}(\A)$ be the subalgebra of $\mathbb{T}^{\infty}(\A)$ generated by elements $K_{\alpha,r}$
with $\alpha\in K(\A)$ and $0\leq r<m-1$.
\begin{corollary}
There is an embedding of algebras
$$\xymatrix{\iota:\M\H_m(\A)\ar@{^{(}->}[r]&\mathcal {D}\mathcal {H}(\A)\otimes_{\mathbb{C}}\mathbb{T}^{m-1}(\A).}$$
\end{corollary}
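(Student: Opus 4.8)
The plan is to compose the embedding $\lambda\colon\M\H_m(\A)\hookrightarrow\M\H(\A)$ from Proposition \ref{erqian} with the isomorphism $\hat{\Psi}^{-1}\colon\M\H(\A)\xrightarrow{\cong}\D\H(\A)\otimes_{\CC}\T^\infty(\A)$ from \eqref{tg}, and then check that the image of this composite lands inside the subalgebra $\D\H(\A)\otimes_{\CC}\T^{m-1}(\A)$. Since both $\lambda$ and $\hat{\Psi}^{-1}$ are injective algebra homomorphisms, the corestriction $\iota:=\hat{\Psi}^{-1}\circ\lambda$ to its image will automatically be an embedding once we know the image is contained in $\D\H(\A)\otimes_{\CC}\T^{m-1}(\A)$.

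The key step is to trace where the generators of $\M\H_m(\A)$ listed in Proposition \ref{main3} go. By Proposition \ref{erqian} we have $\lambda(J_{\alpha,r})=K_{\alpha,r}$, $\lambda(\X_{M,r})=E_{M,r}$ for $0\le r<m-1$, and $\lambda(\X_{P,m-1})=E_{P,m-1}$. Applying $\hat{\Psi}^{-1}$ as recorded after \eqref{tg}: first, $\hat{\Psi}^{-1}(K_{\alpha,r})=K_{\alpha,r}$ which lies in $\T^{m-1}(\A)$ for $0\le r<m-1$; second, for $0\le r<m-1$,
\[
\hat{\Psi}^{-1}(E_{M,r})=Z_M^{[r]}\ast\prod_{i=0}^{r-1}K_{(-1)^{r-i-1}\hat{M},i},
\]
and every $K$-factor appearing here has second index $i$ with $0\le i\le r-1<m-1$, so this element lies in $\D\H(\A)\otimes_{\CC}\T^{m-1}(\A)$. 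Finally, $\hat{\Psi}^{-1}(E_{P,m-1})=Z_P^{[m-1]}\ast\prod_{i=0}^{m-2}K_{(-1)^{m-i-2}\hat{P},i}$, whose $K$-factors again have indices $i\le m-2<m-1$. Hence the image of every generator of $\M\H_m(\A)$ under $\hat{\Psi}^{-1}\circ\lambda$ lies in the subalgebra $\D\H(\A)\otimes_{\CC}\T^{m-1}(\A)$; since this subalgebra is closed under multiplication, the image of all of $\M\H_m(\A)$ is contained in it.

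The only point requiring care — and the likely main obstacle — is making sure that the generating set in Proposition \ref{main3} really does include $\X_{P,m-1}$ (and not merely the $\X_{M,r}$ with $r<m-1$), so that the case $r=m-1$ is genuinely covered; and that when $m-1=0$ (i.e.\ $m=1$, excluded here since $m\ge2$) no degenerate behaviour creeps in. Since we are assuming $m\ge2$ throughout this section, the formulas above are valid, and the verification is a direct bookkeeping of the second indices of the $K_{\alpha,i}$-factors. Combining this containment with the injectivity of $\lambda$ (Proposition \ref{erqian}) and of $\hat{\Psi}$ (so that $\hat{\Psi}^{-1}$ is injective), we conclude that
\[
\iota:=\hat{\Psi}^{-1}\circ\lambda\colon \M\H_m(\A)\hookrightarrow \D\H(\A)\otimes_{\CC}\T^{m-1}(\A)
\]
is an embedding of algebras, as claimed. $\square$
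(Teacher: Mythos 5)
Your proposal is correct and follows exactly the paper's route: the paper also defines $\iota=\hat{\Psi}^{-1}\circ\lambda$, composing the embedding of Proposition \ref{erqian} with the inverse of $\hat{\Psi}$. Your additional bookkeeping of the indices of the $K_{\alpha,i}$-factors, showing the image of each generator lies in $\mathcal{D}\mathcal{H}(\A)\otimes_{\mathbb{C}}\mathbb{T}^{m-1}(\A)$, is a correct and welcome elaboration of a point the paper leaves implicit.
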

\begin{proof}
Taking $\iota=\hat{\Psi}^{-1}\circ\lambda$ gives the desired homomorphism.
\end{proof}
\begin{remark}
$\M\H_2(\A)$ is used in \cite{DXZ} to realize quantum cluster algebra with principal coefficients as its subquotient.
\end{remark}

\section{Integration maps associated to Hall algebras of $C^2(\P)$}
In this section, let $\mathcal {E}$ be a finitary exact category of global dimension at most one. By \cite{Hubery},  Definition \ref{Hall algebra of abelian category} also applies to the exact category $\mathcal {E}$. We also have the  Riedtmann--Peng formula $(\ref{RPGS})$ in the Hall algebra $\H(\mathcal {E})$ of $\mathcal {E}$.

Let $\mathcal{T}_q(\mathcal {E})$ be the $\mathbb{Z}[q^{\pm}]$-algebra with a basis $\{X^{\alpha}~|~\alpha\in K(\mathcal {E})\}$ and the
multiplication given by
\[X^{\alpha}\ast X^{\beta}=q^{-\lr{\alpha,\beta}}X^{\alpha+\beta},\]
where $\lr{-,-}$ is the Euler form on $K(\mathcal {E})$. According to \cite{Reineke}, we recall the integration map on the Hall algebra $\H(\mathcal {E})$ as follows:
\begin{proposition}\label{jfys}
The integration map $$\int:\H(\mathcal {E})\longrightarrow\mathcal{T}_q(\mathcal {E}),~~[M]\mapsto X^{\hat{M}}$$
is a homomorphism of algebras.
\end{proposition}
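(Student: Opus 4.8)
The plan is to verify that $\int$ respects the multiplication on basis elements, i.e.\ that
$\int([M]\diamond[N]) = \int([M])\ast\int([N])$ for all $[M],[N]\in\Iso(\mathcal{E})$, and that it sends the unit $[0]$ to $X^0$. The latter is immediate since $\hat{0}=0$ in $K(\mathcal{E})$. For the former, I would expand the left-hand side using the definition of $\diamond$ (which, by \cite{Hubery}, is available in $\H(\mathcal{E})$ for an exact category of global dimension at most one), obtaining
$$\int\Big(\sum_{[L]}\frac{|\Ext^1_{\mathcal{E}}(M,N)_L|}{|\Hom_{\mathcal{E}}(M,N)|}[L]\Big)=\sum_{[L]}\frac{|\Ext^1_{\mathcal{E}}(M,N)_L|}{|\Hom_{\mathcal{E}}(M,N)|}X^{\hat{L}}.$$

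The key observation is that whenever $\Ext^1_{\mathcal{E}}(M,N)_L\neq\emptyset$ there is a short exact sequence $0\to N\to L\to M\to 0$ in $\mathcal{E}$, hence $\hat{L}=\hat{M}+\hat{N}$ in the Grothendieck group; so every term $X^{\hat{L}}$ that appears equals $X^{\hat{M}+\hat{N}}$. Therefore the right-hand side collapses to
$$\Big(\sum_{[L]}\frac{|\Ext^1_{\mathcal{E}}(M,N)_L|}{|\Hom_{\mathcal{E}}(M,N)|}\Big)X^{\hat{M}+\hat{N}}.$$
Next I would evaluate the scalar coefficient: summing the Riedtmann--Peng formula $(\ref{RPGS})$ over all $[L]$ gives
$$\sum_{[L]}\frac{|\Ext^1_{\mathcal{E}}(M,N)_L|}{|\Hom_{\mathcal{E}}(M,N)|}=\sum_{[L]}g_{MN}^L\cdot\frac{a_Ma_N}{a_L}\cdot\frac{a_L}{a_Ma_N}\cdot\frac{|\Hom_{\mathcal{E}}(M,N)|}{|\Hom_{\mathcal{E}}(M,N)|},$$
which simplifies; more directly, $\sum_{[L]}|\Ext^1_{\mathcal{E}}(M,N)_L|=|\Ext^1_{\mathcal{E}}(M,N)|$ since the sets $\Ext^1_{\mathcal{E}}(M,N)_L$ partition $\Ext^1_{\mathcal{E}}(M,N)$ according to the isomorphism class of the middle term. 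Hence the coefficient equals $|\Ext^1_{\mathcal{E}}(M,N)|/|\Hom_{\mathcal{E}}(M,N)| = q^{-\lr{M,N}}$, using the definition $(\ref{Euler form})$ of the Euler form and the global dimension one hypothesis (so the Euler form has only the $\Hom$ and $\Ext^1$ terms). This matches $X^{\hat{M}}\ast X^{\hat{N}}=q^{-\lr{\hat{M},\hat{N}}}X^{\hat{M}+\hat{N}}$, completing the verification.

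The step requiring the most care is the identity $\sum_{[L]}|\Ext^1_{\mathcal{E}}(M,N)_L|=|\Ext^1_{\mathcal{E}}(M,N)|$ together with checking that all relevant finiteness holds in the exact (not abelian) setting: one must know that $\Ext^1_{\mathcal{E}}(M,N)$ is finite and that every class in it genuinely has a well-defined middle term up to isomorphism, so that the partition is legitimate. This is exactly what \cite{Hubery} guarantees for finitary exact categories, so I would cite that and the Riedtmann--Peng formula $(\ref{RPGS})$ as stated in the excerpt. The remaining manipulations are purely formal bookkeeping with the Grothendieck group relation $\hat{L}=\hat{M}+\hat{N}$ and the definition of $\ast$ in $\mathcal{T}_q(\mathcal{E})$.
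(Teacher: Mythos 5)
Your proposal is correct and follows essentially the same route as the paper: expand $[M]\diamond[N]$, use $\hat{L}=\hat{M}+\hat{N}$ to pull out $X^{\hat{M}+\hat{N}}$, and sum $|\Ext^1_{\mathcal{E}}(M,N)_L|$ over $[L]$ to get $q^{-\lr{M,N}}$, matching the quantum torus product. The detour through the Riedtmann--Peng formula is unnecessary (as you yourself note with the ``more directly'' partition argument), but the substance agrees with the paper's proof.
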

\begin{proof}
For reader's convenience, we give the proof here. Given objects $M,N\in\mathcal {E}$,
\begin{flalign*}\int[M]\diamond[N] &= \sum\limits_{[L]} {\frac{{|\Ext_\mathcal{E}^1{{(M,N)}_L}|}}{{|\Hom_\mathcal{E}(M,N)|}}} X^{\hat{L}}\\
&=\sum\limits_{[L]} {\frac{{|\Ext_\mathcal{E}^1{{(M,N)}_L}|}}{{|\Hom_\mathcal{E}(M,N)|}}} X^{\hat{M}+\hat{N}}\\
&=q^{-\lr{M,N}}X^{\hat{M}+\hat{N}}\\
&=X^{\hat{M}}\ast X^{\hat{N}}\\
&=\int[M]\ast\int[N].\end{flalign*}
\end{proof}
Since the global dimension of $C^2(\P)$ is equal to one, we can apply the integration map in Proposition \ref{jfys} to the Hall algebra of $C^2(\P)$. Before doing this, we first give a characterization on the Grothendieck group $K(C^2(\P))$ of $C^2(\P)$. By \cite[Proposition 3.2]{Bau}, for each object $M_\bullet=(\xymatrix{M_1\ar[r]^-{d_1}&M_2})\in C^2(\P)$, we have the following injective resolution
$$0\longrightarrow M_{\bullet}\longrightarrow S_{M_{1}}\oplus J_{M_2}\longrightarrow S_{M_2}\longrightarrow0.$$

Let $P_1,P_2,\cdots,P_n$ be all indecomposable projective objects in $\A$ up to isomorphism.
Fixing a minimal injective resolution of $M_\bullet$
\begin{equation}\label{jxns}0\longrightarrow M_{\bullet}\longrightarrow \bigoplus\limits_{i=1}^na_iS_{P_i}\oplus\bigoplus\limits_{i=1}^nc_iJ_{P_i}\longrightarrow \bigoplus\limits_{i=1}^nb_iS_{P_i}\longrightarrow0,\end{equation} we define the dimension vector $\Dim$ on objects in $C^2(\P)$ by setting $$\Dim M_\bullet=(b_1-a_1,\cdots,b_n-a_n,c_1,\cdots,c_n).$$ By the dual of Lemma \ref{jx}, we obtain that $\Dim M_\bullet$ does not depend on the minimality of injective resolutions. Thus, by the dual of Horseshoe Lemma (cf. \cite[Theorem 12.8]{Bu}), we get the additivity of $\Dim$. That is, for any short exact sequence $$0\longrightarrow M_\bullet \longrightarrow L_\bullet\longrightarrow N_\bullet\longrightarrow0$$ in $C^2(\P)$, we have that $\Dim L_\bullet=\Dim M_\bullet+\Dim N_\bullet$.

\begin{lemma}\label{ftg}
The Grothendieck group $K(C^2(\P))$ is a free abelian group having as a basis the set
$$\{\hat{J}_{P_i},\hat{S}_{P_i}~|~1\leq i\leq n\}$$
and there exists a unique group isomorphism $f: K(C^2(\P))\longrightarrow \mathbb{Z}^{2n}$ such that $f(\hat{M_\bullet})=\Dim M_\bullet$ for each object $M_\bullet$ in $C^2(\P)$.
\end{lemma}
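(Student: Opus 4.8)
The plan is to combine three ingredients: the abstract fact that the Grothendieck group of an exact category with enough injectives is generated by the classes of indecomposable injectives, the explicit injective resolution recalled above, and the additivity of $\Dim$ just established. First I would invoke the resolution theorem for exact categories (or simply the description of $K$ in terms of generators and relations): since $C^2(\P)$ has enough injectives and every object has injective dimension at most one, every class $\hat{M_\bullet}$ can be rewritten, using the two-term injective resolution $0\to M_\bullet\to S_{M_1}\oplus J_{M_2}\to S_{M_2}\to 0$, as $\hat{M_\bullet}=\hat{S}_{M_1}+\hat{J}_{M_2}-\hat{S}_{M_2}$. Writing $M_1=\bigoplus_i (\dim\Hom(P_i,M_1/\rad M_1))P_i$ and similarly for $M_2$, this shows the set $\{\hat{J}_{P_i},\hat{S}_{P_i}\mid 1\le i\le n\}$ generates $K(C^2(\P))$. (Alternatively, since by Lemma \ref{indobj} the indecomposables of $C^2(\P)$ are $S_P$, $T_M$ and $J_P$, and each $T_M$ fits in $0\to S_{\Omega_M}\to T_M\to J_{P_M}\to$... wait, rather $0\to T_{\Omega_M}\to J_{P_M}\to T_M\to 0$ in $C^2(\P)$, so $\hat{T}_M=\hat{J}_{P_M}-\hat{T}_{\Omega_M}$, and iterating, $\hat{T}_M$ lies in the span of the $\hat{J}_{P_i}$; and $\hat{S}_P$ is already in the list. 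This gives generation directly from the classification.)

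Next I would prove linear independence, i.e.\ that these $2n$ classes are a \emph{free} basis. Here the map $\Dim$ does the work: by construction $\Dim S_{P_i}=(-e_i,0)$, $\Dim J_{P_i}=(0,e_i)$ (reading off the minimal injective resolutions $0\to S_{P_i}\to S_{P_i}\to 0$ and $0\to J_{P_i}\to J_{P_i}\to 0$ — both objects are themselves injective, so the resolution is concentrated in degree zero), where $e_i$ is the $i$-th standard basis vector of $\mathbb{Z}^n$. Wait, I need to be careful with signs: $\Dim M_\bullet=(b_1-a_1,\dots,b_n-a_n,c_1,\dots,c_n)$, so for $M_\bullet=S_{P_i}$ the resolution has $a_i=1$, all $b_j=0$, all $c_j=0$, giving $\Dim S_{P_i}=(-e_i,0)$; for $M_\bullet=J_{P_i}$ we get $c_i=1$, all $a_j=b_j=0$, so $\Dim J_{P_i}=(0,e_i)$. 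Since $\Dim$ is additive on short exact sequences, it induces a group homomorphism $\overline{\Dim}:K(C^2(\P))\to\mathbb{Z}^{2n}$ with $\overline{\Dim}(\hat{M_\bullet})=\Dim M_\bullet$, and this is well defined precisely because $\Dim$ does not depend on the choice (or minimality) of the injective resolution — the point already checked via the dual of Lemma \ref{jx} and the dual Horseshoe Lemma. The images $\overline{\Dim}(\hat{S}_{P_i})=(-e_i,0)$ and $\overline{\Dim}(\hat{J}_{P_i})=(0,e_i)$ are visibly a $\mathbb{Z}$-basis of $\mathbb{Z}^{2n}$; hence $\overline{\Dim}$ is surjective and the generating set $\{\hat{J}_{P_i},\hat{S}_{P_i}\}$ is mapped to a basis, which forces these $2n$ classes to be $\mathbb{Z}$-linearly independent. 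Therefore $K(C^2(\P))$ is free of rank $2n$ with the stated basis, and $\overline{\Dim}$ is an isomorphism; taking $f:=\overline{\Dim}$ settles existence, while uniqueness is automatic since $f$ is prescribed on a generating set of $K(C^2(\P))$.

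I expect the main obstacle to be the generation step done cleanly — specifically, making sure that the classes of \emph{all} objects, not just the explicitly listed ones, lie in the span of $\{\hat{J}_{P_i},\hat{S}_{P_i}\}$. The slick route is via the universal injective resolution $0\to M_\bullet\to S_{M_1}\oplus J_{M_2}\to S_{M_2}\to 0$ from \cite[Proposition 3.2]{Bau}, which is valid for every $M_\bullet\in C^2(\P)$ and immediately yields $\hat{M_\bullet}=\hat{S}_{M_1}+\hat{J}_{M_2}-\hat{S}_{M_2}$ in $K(C^2(\P))$; decomposing the projectives $M_1,M_2$ into indecomposables then expresses $\hat{M_\bullet}$ as a $\mathbb{Z}$-combination of $\hat{S}_{P_i}$ and $\hat{J}_{P_i}$. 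Once generation is in hand, everything else is bookkeeping with $\overline{\Dim}$, and no delicate argument remains.
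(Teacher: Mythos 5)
Your proposal is correct and follows essentially the same route as the paper: generation of $K(C^2(\P))$ from the two-term injective resolution $0\to M_\bullet\to S_{M_1}\oplus J_{M_2}\to S_{M_2}\to 0$, and linear independence via the homomorphism induced by the additivity of $\Dim$, whose values on $\hat{S}_{P_i}$, $\hat{J}_{P_i}$ form a $\mathbb{Z}$-basis of $\mathbb{Z}^{2n}$ (you even handle the sign $(-e_i,0)$ more carefully than the paper's ``canonical basis'' phrasing). The only blemish is the parenthetical ``alternatively'' aside, whose claimed sequence $0\to T_{\Omega_M}\to J_{P_M}\to T_M\to 0$ is not exact (the correct relations are $\hat{T}_M=\hat{T}_{P_M}+\hat{S}_{\Omega_M}$ and $\hat{T}_P=\hat{J}_P-\hat{S}_P$), but this aside is not used in your main argument.
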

\begin{proof}
For any object $M_\bullet\in C^2(\P)$, taking an injective resolution of $M_\bullet$ as (\ref{jxns}), we obtain that in $K(C^2(\P))$
$$\hat{M}_\bullet=\sum\limits_{i=1}^n((a_i-b_i)\hat{S}_{P_i}+c_i\hat{J}_{P_i}).$$ This shows that $\{\hat{J}_{P_i},\hat{S}_{P_i}~|~1\leq i\leq n\}$ generates the group $K(C^2(\P))$.

For any objects $M_\bullet,N_\bullet\in C^2(\P)$, it is clear that $M_\bullet\cong N_\bullet$ implies $\Dim M_\bullet=\Dim N_\bullet$, since their minimal injective resolutions are isomorphic. Thus, the additivity of $\Dim$ implies the existence of a unique group homomorphism $f: K(C^2(\P))\longrightarrow \mathbb{Z}^{2n}$ such that $f(\hat{M_\bullet})=\Dim M_\bullet$ for each object $M_\bullet$ in $C^2(\P)$. Since the image of the generating set $\{\hat{J}_{P_i},\hat{S}_{P_i}~|~1\leq i\leq n\}$ under the homomorphism $f$ is the canonical basis of the free group $\mathbb{Z}^{2n}$, this set is $\mathbb{Z}$-linearly independent in $K(C^2(\P))$. It follows that $K(C^2(\P))$ is free and that $f$ is an isomorphism.
\end{proof}

Let $\Lambda$ be the bilinear form on $\mathbb{Z}^{2n}$ obtained from the Euler form of $C^2(\P)$ by the isomorphism $f$ in Lemma \ref{ftg}. Define the \emph{quantum torus} associated to the pair $\{\mathbb{Z}^{2n},\Lambda\}$ to be the $\mathbb{Z}[q^{\pm}]$-algebra $\mathcal{T}$ with a basis $\{X^{\bf e}~|~{\bf e}\in\mathbb{Z}^{2n}\}$ and the
multiplication given by
\[X^{\bf e}\ast X^{\bf f}=q^{-\Lambda({\bf e,\bf f})}X^{\bf e+\bf f}.\]
\begin{corollary}\label{jfyscl}
The integration map $$\int:\H(C^2(\P))\longrightarrow\mathcal{T},~~[M_\bullet]\mapsto X^{\Dim{M_\bullet}}$$
is a homomorphism of algebras.
\end{corollary}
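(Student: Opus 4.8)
The plan is to deduce Corollary~\ref{jfyscl} directly from Proposition~\ref{jfys} by checking that $C^2(\P)$ satisfies the hypotheses of that proposition and that the target $\mathcal{T}$ is precisely the algebra $\mathcal{T}_q(\mathcal{E})$ associated to $\mathcal{E}=C^2(\P)$. First I would note that $C^2(\P)$ is a finitary exact category of global dimension at most one; finitariness follows since $\A$ is finitary and $C^2(\P)$ is built from finitely many copies of objects of $\A$, and the global dimension statement is recalled in Section~6 (the global dimension of $C^m(\P)$ is $m-1$, hence $1$ for $m=2$). Therefore the Riedtmann--Peng formula holds in $\H(C^2(\P))$ and Proposition~\ref{jfys} applies verbatim, yielding an algebra homomorphism $\int:\H(C^2(\P))\to\mathcal{T}_q(C^2(\P))$ sending $[M_\bullet]\mapsto X^{\widehat{M_\bullet}}$, where here $\widehat{M_\bullet}$ denotes the class in $K(C^2(\P))$.

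The second step is to transport this along the isomorphism $f:K(C^2(\P))\xrightarrow{\ \cong\ }\mathbb{Z}^{2n}$ from Lemma~\ref{ftg}. By construction $\Lambda$ is the bilinear form on $\mathbb{Z}^{2n}$ obtained from the Euler form of $C^2(\P)$ via $f$, i.e. $\Lambda(f(\alpha),f(\beta))=\lr{\alpha,\beta}$ for all $\alpha,\beta\in K(C^2(\P))$. Hence the $\mathbb{Z}[q^{\pm}]$-linear map $\mathcal{T}_q(C^2(\P))\to\mathcal{T}$, $X^\alpha\mapsto X^{f(\alpha)}$, is an algebra isomorphism, since it matches the structure constants: $X^\alpha\ast X^\beta=q^{-\lr{\alpha,\beta}}X^{\alpha+\beta}\mapsto q^{-\Lambda(f(\alpha),f(\beta))}X^{f(\alpha)+f(\beta)}=X^{f(\alpha)}\ast X^{f(\beta)}$. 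Composing $\int$ with this isomorphism gives the desired algebra homomorphism $\H(C^2(\P))\to\mathcal{T}$; on a basis element $[M_\bullet]$ it sends $[M_\bullet]\mapsto X^{f(\widehat{M_\bullet})}=X^{\Dim M_\bullet}$, using the defining property $f(\widehat{M_\bullet})=\Dim M_\bullet$ of $f$ from Lemma~\ref{ftg}.

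I do not expect any serious obstacle: the two genuine inputs --- that $\int$ is a homomorphism for exact categories of global dimension $\le 1$, and the explicit description of $K(C^2(\P))$ together with $\Dim$ --- have already been established in Proposition~\ref{jfys} and Lemma~\ref{ftg} respectively. The only point requiring a line of care is the compatibility $\Lambda(f(\alpha),f(\beta))=\lr{\alpha,\beta}$, but this is immediate from the definition of $\Lambda$ as the pushforward of the Euler form along $f$. Thus the proof is essentially the one-paragraph observation that Corollary~\ref{jfyscl} is the specialization of Proposition~\ref{jfys} to $\mathcal{E}=C^2(\P)$ rewritten in the coordinates supplied by Lemma~\ref{ftg}.
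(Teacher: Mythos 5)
Your proposal is correct and matches the paper's intended argument: the corollary is stated as an immediate specialization of Proposition \ref{jfys} to $\mathcal{E}=C^2(\P)$, rewritten via the isomorphism $f$ of Lemma \ref{ftg} and the definition of $\Lambda$ as the transported Euler form. The extra checks you spell out (finitariness, global dimension one, compatibility of $\Lambda$ with $\lr{\cdot,\cdot}$) are exactly the points the paper leaves implicit.
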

\begin{remark}({\bf Further directions})
$(1)$~Since the global dimension of $C^2(\P)$ is equal to one, we want to know whether the Hall algebra $\M\H_2(\A)$ has a bialgebra structure, or say, whether Green's formula on the Hall numbers of a hereditary abelian category holds in $C^2(\P)$;

$(2)$~Let $Q$ be an acyclic quiver of $n$ vertices, and take $\A$ to be the category of finite dimensional $kQ$-modules. Whether we can use the integration map in Corollary \ref{jfyscl} to relate the Hall algebra $\M\H_2(\A)$ with quantum cluster algebras (with principal coefficients). Namely, Whether we can relate the integration map in Corollary \ref{jfyscl} with the work in \cite{DXZ}.
\end{remark}

\section*{Acknowledgments}
The author is grateful to Shiquan Ruan for his suggestion to relate Hall algebra of $m$-cyclic complexes with Hall algebra of $m$-term complexes. He also would like to thank Fan Xu for the conversations on integration maps.


\begin{thebibliography}{99}
\bibitem{Bau} R. Bautista, The category of morphisms between projective modules, Comm. Algebra {\bf 32}(11) (2004), 4303--4331.
\bibitem{Bau2} R. Bautista, M. Souto-Salorio and R. Zuazua, Almost split sequences for complexes of fixed size, J. Algebra {\bf 287} (2005), 140--168.

\bibitem{BZ05} A. Berenstein and A. Zelevinsky, Quantum cluster algebras,
Adv. Math. {\bf 195}(2) (2005), 405--455.

\bibitem{Br} T. Bridgeland, {Quantum groups via Hall algebras of complexes}, Ann. Math. \textbf{177} (2013), 1--21.
\bibitem{Bu} T. B\"uhler, Exact Categories, Expo. Math. {\bf 28}(1) (2010), 1--69.


\bibitem{Cha} C. Chaio, I. Pratti and M. Souto-Salorio, On sectional paths in a category of complexes of fixed size, Algebr. Represent. Theor. {\bf 20} (2017), 289--311.

\bibitem{Cha2} C. Chaio, A. Chaio and I. Pratti, On the type of a category of complexes of fixed size and the strong global dimension, Algebr. Represent. Theor., https://doi.org/10.1007/s10468-018-9823-3.
\bibitem{ChenD} Q. Chen and B. Deng, {Cyclic complexes, Hall polynomials and simple Lie algebras}, J. Algebra {\bf 440} (2015), 1--32.

\bibitem{DXZ} M.~Ding, F.~Xu and H.~Zhang, Quantum cluster algebras via Hall algebras of morphisms, arXiv:1903.00864v1.
%
\bibitem{FZ} S.~Fomin and A.~Zelevinsky,  Cluster algebras I: Foundations,  J.
Amer. Math. Soc. \textbf{15}(2) (2002), 497--529.

\bibitem{Gor2} M. Gorsky, {Semi-derived and derived Hall algebras for stable categories}, Int. Math. Res. Notices {\bf 2018}(1), 138--159.
\bibitem{Hubery} A. Hubery, {From triangulated categories to Lie Algebras: A
theorem of Peng and Xiao}, Trends in Representation Theory of Algebras and Related Topics, Contemp. Math. {\bf 406}, Amer. Math. Soc., Providence, 2006, 51--66.

\bibitem{LinP} J. Lin and L. Peng, {Modified Ringel--Hall algebras, Green's formula and derived Hall algebras}, J. Algebra {\bf 526} (2019), 81--103.
\bibitem{Peng} L. Peng, {Some Hall polynomials for representation-finite trivial extension algebras},
J. Algebra {\bf 197} (1997), 1--13.
\bibitem{Reineke} M. Reineke, The Harder-Narasimhan system in quantum groups and cohomology of quiver moduli, Invent. Math. {\bf 152}(2) (2003), 349--368.
\bibitem{Riedtmann} C. Riedtmann, {Lie algebras generated by indecomposables}, J. Algebra {\bf 170}(2) (1994), 526--546.

\bibitem{R90} C.~M. Ringel, {Hall algebras}, in: S. Balcerzyk, et al. (Eds.), Topics in Algebra, Part 1, in: Banach Center Publ. {\bf 26} (1990), 433--447.

\bibitem{R90a} C.~M. Ringel, {Hall algebras and quantum groups}, Invent. Math. {\bf 101} (1990), 583--592.


\bibitem{Sc} O. Schiffmann, {Lectures on Hall algebras},
Geometric methods in representation theory II,  1--141, S$\acute{\rm e}$min. Congr., 24-II,
Soc. Math. France, Paris, 2012.

\bibitem{Toen2006} B. To\"en, {Derived Hall algebras}, Duke Math. J. {\bf 135} (2006), 587--615.

\bibitem{XiaoXu} J. Xiao and F. Xu, {Hall algebras associated to triangulated categories}, Duke Math. J. {\bf 143}(2) (2008), 357--373.

\bibitem{ZHC2} H. Zhang, {Bridgeland's Hall algebras and Heisenberg doubles}, {J. Algebra Appl.} {\bf 17}(6) (2018).
\end{thebibliography}
\end{document}